\numberwithin{equation}{section}
\newtheorem{theorem}{Theorem}
\newtheorem{lemma}[theorem]{Lemma}
\newdefinition{remark}[theorem]{Remark}
\newproof{proof}{Proof}
\newproof{pot}{Proof of Theorem \ref{main}}
\newcommand{\Z}{\mathbb{Z}}
\newcommand{\Q}{\mathbb{Q}}
\newcommand{\A}{\mathbb{A}}
\newcommand{\N}{\mathfrak{N}}
\newcommand{\n}{\mathfrak{n}}
\newcommand{\s}{\mathfrak{s}}
\newcommand{\gen}{\text{gen}}
\newcommand{\spn}{\text{spn}}
\newcommand{\lan}{\langle}
\newcommand{\ran}{\rangle}
\DeclareMathOperator{\ord}{ord}
\DeclareMathOperator{\rad}{sf}
\begin{document}

\begin{frontmatter}

\title{A characterization of almost universal ternary inhomogeneous quadratic polynomials with conductor 2}
\author{Anna Haensch \corref{cor1}}

\cortext[cor1]{Corresponding Author: Anna Haensch, Duquesne University, Department of Mathematics and Computer Science, 600 Forbes Ave., Pittsburgh, PA, 15282, USA; Email, haenscha@duq.edu; Phone, +001 412 396 4851}

%\author{Anna Haensch}
%\address{Anna Haensch, Department of Mathematics and Computer Science, Duquense University, Pittsburgh, PA 15282, USA. haenscha@duq.edu}

%--------------------Abstract --------------------------%
\begin{abstract}
An integral quadratic polynomial (with positive definite quadratic part) is called almost universal if it represents all but finitely many positive integers.  In this paper, we provide a characterization of almost universal ternary quadratic polynomials with conductor 2.  
\end{abstract}
%--------------------Abstract --------------------------%

%--------------------Keywords-------------------------%
\begin{keyword}
primitive spinor exceptions \sep ternary quadratic forms 

\MSC[2010] 11E12 \sep11E20 \sep 11E25
\end{keyword}
%--------------------Keywords--------------------------%

\end{frontmatter}

\section{Introduction}
For a polynomial $f(x_1,...,x_n)$ with rational coefficients and an integer $a$, we say that $f$ represents $a$ if the diophantine equation $f(x_1,...,x_n)=a$ has a solution over the integers.  One particularly interesting question asks, when is a polynomial {\em almost universal}; that is, when does a polynomial represent all but finitely many natural numbers?   In the specie when $f$ is a quadratic forms, this question has attracted a great deal of interest over the last years. 

Given a quadratic map $Q$ and $\Z^n$ in the standard basis, the pair $(\Z^n,Q)$ is a $\Z$-lattice of rank $n$, which we denote by $N$.  Since the representation behavior for indefinite lattices is well understood, all lattices in this paper are assumed to be positive definite;  for the indefinite case the reader is referred to the survey paper by Hsia \cite{H99}.

A homogeneous integral quadratic polynomial can always be viewed as a quadratic lattice.  For rank greater than 4, Tartakowsky's results in \cite{T29} imply that a lattice is almost universal if it is universal over $\Z_p$ for every prime $p$.  In the quaternary case, Bochnak and Oh \cite{BO09} give an effective method to determine when a lattice is almost universal, resolving an investigation first initiated by Ramanujan in \cite{R00}.  For the ternary case, it is a well known consequence of Hilbert Reciprocity that a positive definite ternary $\Z$-lattice is anisotropic at an odd number of finite primes, and therefore is is not universal at these primes.  Hence the lattice fails to represent an entire square class in $\Q_p/\Q_p^\times$ at these primes, and hence cannot be almost universal. 

Therefore, we turn out attention to inhomogeneous quadratic polynomials of the form
\[
f(x)=Q(x)+L(x)+c,
\]
where $Q(x)$ is a quadratic form, $L$ is a linear form, and $c$ is a constant.  It is not a surprise that we can study the arithmetic of these polynomials from the geometric perspective of quadratic spaces and lattices.  Indeed, $Q$ can be viewed as the quadratic map on $N=(\Z^n,Q)$, and associated to $Q$ is the symmetric bilinear map $B$.  Under the assumption that $Q$ is positive definite, $L(x)=2B(\nu,x)$ for a unique choice of vector $\nu$ in $\mathbb Q N$ which is the quadratic space underlying $N$.  The choices for $\nu$ and $N$ are completely determined by the coefficients of $Q$ and $L$.  Since the constant $c$ does not contribute anything essential to the arithmetic of $f$, there is no harm in assuming that it is equal to zero.   Thus,  an integer $a$ is represented by $f(x)$ if and only if $Q(\nu)+a$ is represented by the coset $\nu+N$.  In general, there is no local-global principle for representations of integers by cosets of quadratic lattices.  However, when $n\geq 4$, Chan and Oh \cite[Theorem 4.9]{CO12} show how the asymptotic local-global principles for representations with approximation property by J\"ochner- Kitaoka \cite{JK94} and by Hsia-J\"ochner \cite{HJ97} lead to an asymptotic local-global principle for representations of integers by cosets.  Therefore we restrict the discussions of this paper to the ternary case.

Given a lattice $N$ and a vector $\nu\in \Q N$, we define the conductor $\mathfrak{m}$ of $\nu+N$ as in \cite{H13}; that is, the minimal integer for which $\mathfrak{m}\nu\in N$.  In \cite{H13} we give a characterization of almost universal ternary inhomogeneous quadratic polynomials where $\mathfrak{m}$ is an odd prime power.  In this paper, we will restrict our attention to $\mathfrak{m}=2$. Therefore, we will impose the following assumption through this paper,
\[
2\nu\in N, \tag{I}\label{R1}
\]
implying that $[M:N]=2$, where $M:=\Z\nu+N$. The main difference between the odd case and the even case is that $N_2$ is not necessarily diagonalizable.  Consequently, we will impose a few additional assumptions for our convenience; letting $Q(\nu+N)$ and $B(\nu,N)$, respectively, denote the $\Z$-ideals generated by $Q(\nu+x)$ and $B(\nu,x)$ for all $x\in N$, we require that    
\[
Q(\nu+N)\subseteq \Z \text{ and }B(\nu,N)\subseteq \Z.\tag{II}\label{R2}
\]
An immediate consequence of \eqref{R2} is that $Q(\nu)\in \Z$, and so we define a positive integer $\beta:=\ord_2(Q(\nu))$ and let $2^\beta\epsilon:=Q(\nu)$ where $\epsilon\in \Z_2^\times$.  It also follows from \eqref{R2} that $\n(\nu,N)\subseteq \Z$, where $\n(\nu,N)$ denotes the integral ideal generated by $Q(x)+2B(\nu,x)$ for all $x\in N$.  If $Q(x)+2B(\nu,x)$ were almost universal, then the ternary $\Z$-lattice $M$ would be almost universal, which cannot happen.  Therefore it is logical to impose one final restriction,  
\[
\n(\nu,N)=2^\alpha \Z\text{ with }\alpha>0,\tag{III}\label{R3}
\]
and now replace $f(x)$ with $H(x):=\frac{Q(x)+2B(\nu,x)}{2^\alpha}$.  For a classical example of this type of polynomial, we turn to the sum of three triangular numbers, which can be written
\[
\frac{x(x+1)}{2}+\frac{y(y+1)}{2}+\frac{z(z+1)}{2}=\frac{4(x^2+y^2+z^2)+4(x+y+z)}{8},
\]
giving $\alpha=3$ and $N\cong\lan4,4,4\ran$ in a basis $\{e_1,e_2,e_3\}$ and $\nu=\frac{e_1+e_2+e_3}{2}$.  We note here that there are of course many possible choices for $\nu$, $N$, and consequently $\alpha$, but this particular choice satisfies \eqref{R1}, \eqref{R2}, and \eqref{R3}. 

For the remainder of the document we assume that \eqref{R1}, \eqref{R2}, and \eqref{R3} hold.  We will define an integer $\lambda$ by 
\[
\lambda:=\begin{cases}
1 & \text{ if }\ord_2(dN)-3\beta\text{ is even}\\
2 & \text{ if }\ord_2(dN)-3\beta\text{ is odd}.
\end{cases}
\] 
Let $\rad(dN)'$ denote the odd square-free part of $dN$. Now we state the main theorem, the proof of which will be given in section 3, after establishing several technical lemmas.  Below, the scale of $N$, denoted $\s(N)$, is the $\Z$-ideal generated by $B(x,y)$ for all $x,y\in N$, and the norm of $N$, denoted $\n(N)$, is the $\Z$-ideal generated by $Q(x)$ for all $x\in N$, following the notional set forth in O'meara's infuential text on quadratic forms \cite{OM}. 
 
\begin{theorem}\label{main}
$H(x)$ is almost universal if and only if $N_p$ represents all of $\Z_p$ for every odd prime $p$, and one of the following holds:
\begin{enumerate}[(1)]
\item $\alpha=\beta+1$, and

\begin{enumerate}[(a)]
\item $B(\nu,N_2)=2^{\beta-1}\Z_2$; or,
\item $B(\nu,N_2)\subseteq 2^{\beta}\Z_2$, and 
\begin{enumerate}[(i)]
\item $2\s(N)=\n(N)=2^{\beta+2}\Z$; or, 
\item $N_2$ diagonalizable and $\ord_2(dN)=3+3\beta$; or,
\item $N_2$ is diagonalizable, $\ord_2(dN)=5+3\beta$. and $B(\nu,N_2)=2^{\beta+1}\Z$. 
\end{enumerate}
\end{enumerate}

\item $\alpha=\beta+2$, and

\begin{enumerate}[(a)]
\item $B(\nu,N_2)=2^{\beta}\Z_2$, and
\begin{enumerate}[(i)]
\item $\ord_2(dN)-3\beta$ is odd; or 
\item $\ord_2(dN)-3\beta=4$; or,
\item $\rad(dN)'$ is divisible by a prime $p$ for which $\left(\frac{-\lambda}{p}\right)=-1$; or,
\item $N_2$ has a binary Jordan component with the square free part of its discriminant congruent to $5\mod 8$; or,
\end{enumerate}
\item $B(\nu,N_2)=2^{\beta+1}\Z_2$, $\n(G_2)=2^{\beta+2}\Z_2$, where $G_2$ is the orthogonal complement of $\nu$ in $N_2$, and 
\begin{enumerate}[(i)]
\item $\ord_2(dN)-3\beta$ is odd; or,
\item $\ord_2(dN)-3\beta=6$; or,
\item $\rad(dN)'$ is divisible by a prime $p$ for which $\left(\frac{-\lambda}{p}\right)=-1$. 
\end{enumerate}
\end{enumerate}

\item $\alpha=\beta+3$, and
 
 \begin{enumerate}[(a)]
\item $G_2$ is not diagonalizable; or,
\item $\n(G_2)=2^{\alpha}\Z_2$, and $\ord_2(dN)-3\beta$ is even, or $\ord_2(dN)=9+3\beta$; or,
\item $\n(G_2)=2^{\alpha+1}\Z_2$ and $\ord_2(dN)-3\beta$ is odd; or,
\item $\rad(dN)'$ is divisible by a prime $p$ satisfying $\left(\frac{-\lambda}{p}\right)=-1$; or,
\item $\rad(dN)'\not\equiv Q(\nu)'\mod 8$; or,
\item $\n(G_2)=2^{\alpha}\Z_2$ and $2^\alpha Q(\nu)$ is not represented by $G_2$. 
\end{enumerate}

\item $\alpha=\beta+2$ or $\beta+3$, and $\frac{2^\beta\rad(dN)'-Q(\nu)}{2^{\alpha}}$ is represented by $H(x)$. 
\end{enumerate}
\end{theorem}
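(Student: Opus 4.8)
The plan is to reduce the statement to the machinery of spinor genus theory for ternary lattices, following the strategy established in \cite{H13} for the odd conductor case, but adapting it to the subtleties introduced by the non-diagonalizability of $N_2$. The key observation is that an integer $a$ is represented by $H(x)$ precisely when $2^\alpha a$ lies in the set $\{Q(\nu+x) : x\in N\} = \{Q(\nu)+2B(\nu,x)+Q(x):x\in N\}$, i.e. when $2^\alpha a + Q(\nu)$ is represented by the coset $\nu+N$. Since $2\nu\in N$, the lattice $M=\Z\nu+N$ is a ternary $\Z$-lattice containing $N$ with index $2$, and representations by the coset $\nu+N$ are governed by representations by $M$ together with a congruence/ramification condition at $2$ that distinguishes the coset $\nu+N$ from $N$ itself inside $M/N$. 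So the first step is to make this translation precise and identify, for each odd prime $p$, exactly when $N_p$ (equivalently $M_p$) represents all of $\Z_p$ — this is forced by the hypothesis in the theorem statement, and at $p=2$ one analyzes the local coset representation conditions, which is where the case division by $\alpha-\beta\in\{1,2,3\}$ and the various Jordan splitting data for $N_2$ come from.

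Next I would invoke the asymptotic local-global theory: by the work on representations of cosets (the ternary analogue of Chan–Oh, going back to the effective results on spinor exceptional integers for ternary forms), an integer $a$ of suitable size is represented by $\nu+N$ globally if and only if it is represented everywhere locally \emph{and} $2^\alpha a + Q(\nu)$ is not a primitive spinor exception for the relevant genus. Thus $H(x)$ is almost universal if and only if (i) almost every positive integer is represented by $\nu+N$ locally everywhere — which unpacks into the odd-prime condition plus the local conditions at $2$ enumerated in cases (1)–(3) — and (ii) the set of primitive spinor exceptional square classes does not swallow an entire arithmetic progression of admissible integers. Step (ii) is controlled by the relative spinor norm groups $\theta(\nu+N)$ and the idele class characterization of spinor exceptions; the conditions involving $\bigl(\tfrac{-\lambda}{p}\bigr)=-1$ for primes $p\mid\rad(dN)'$, the congruence $\rad(dN)'\equiv Q(\nu)'\bmod 8$, and the $5\bmod 8$ binary Jordan component condition are precisely the arithmetic constraints that force the spinor exceptional class — when it exists — to be a single square class rather than something that obstructs almost universality. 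Case (4) is the genuinely global escape hatch: even when the spinor exception exists and is not killed locally, if the one candidate exceptional value $\tfrac{2^\beta\rad(dN)'-Q(\nu)}{2^\alpha}$ is actually represented by $H$, then no integer in its progression is omitted, so $H$ is almost universal anyway.

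The key technical lemmas (the ones the excerpt says will be proved in Section 2) should compute: the local representation behavior of $\nu+N_2$ under each Jordan decomposition of $N_2$, as a function of $\alpha$, $\beta$, $B(\nu,N_2)$, $\n(G_2)$ where $G_2$ is the orthogonal complement of $\nu$; the relative spinor norms $\theta^*(\nu+N)$ at each prime, especially at $2$, where non-diagonalizability forces a more delicate case analysis than in \cite{H13}; and the determination of when a primitive spinor exceptional integer exists for the genus of $M$ and what its square class is. Assembling these, one matches the everywhere-local-plus-no-spinor-obstruction criterion against the enumerated cases to get the equivalence. I expect the main obstacle to be the local analysis at $p=2$: because $N_2$ need not be diagonalizable, one must separately handle the Jordan splittings containing a binary modular component (the hyperbolic plane or the "$A$"-type form $\bigl(\begin{smallmatrix}2&1\\1&2\end{smallmatrix}\bigr)$-scalings), track how $\nu$ sits relative to that component, and compute both the coset representation condition and the relative spinor norm in each configuration — this is exactly the bookkeeping that produces the long list of subcases (i)–(iv) under (2a), (2b), and (3), and getting it complete and correct (including the borderline $\ord_2(dN)-3\beta$ small-value exceptions like $=4$ and $=6$) is the crux of the argument.
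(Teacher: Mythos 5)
Your overall architecture does match the paper's: translate representability by $H$ into representability of the targets $2^\beta\epsilon+2^\alpha n$ by the coset $\nu+N$, classify the local structure at $2$ via Jordan splittings of $N_2$ and the position of $\nu$ (this is where the subcases of (1)--(3) come from), and decide almost universality through spinor norms, relative spinor norms and primitive spinor exceptions of $\gen(\bar{M})$, with (4) as the case where the exceptional square class $\rad(dN)'$ survives. But there is a genuine gap at the pivot of your argument: you invoke an ``asymptotic local--global principle for representations by cosets (the ternary analogue of Chan--Oh)''. No such principle exists off the shelf; Chan--Oh requires rank at least $4$, and the paper restricts to the ternary case precisely because that principle fails there. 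Supplying a substitute is the real content of the proof: the paper applies the Duke--Schulze-Pillot corollary to the \emph{lattice} $\bar{M}$ (or, in case 1(a), to the sublattice $R=\Z[\nu,2e_1,2e_3]$) and then forces every representation of $2^\beta\epsilon+2^\alpha n$ by $M$ (resp.\ $R$) to come from the coset $\nu+N$, using that $\n(N)\subseteq 2^{\beta+1}\Z$ (resp.\ $\n(R_2\cap N_2)=2^{\beta+2}\Z_2$) while the target has $2$-order exactly $\beta$. Your sketch gestures at ``a congruence/ramification condition at $2$'' but never pins this down, and without it the passage from lattice representations back to $H$ does not go through.

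The necessity direction is also largely missing. When $\alpha=\beta+1$ and 1(a),(b) fail, the obstruction is local and explicit (Lemma~\ref{L6}: $M_2$, hence $H$, misses whole square classes or parities); no spinor-genus input is involved, so your description of (1)--(3) as uniformly ``local conditions plus no spinor obstruction'' blurs two different mechanisms. When $\alpha=\beta+2,\beta+3$ and (2)--(4) all fail, one must prove, not assume, that $\rad(dN)'$ \emph{is} a primitive spinor exception of $\gen(\bar{M})$ --- i.e.\ verify $\theta(O^+(\bar{M}_p))\subseteq\N_p(E)$ and $\theta^*(\bar{M}_p,\rad(dN)')=\N_p(E)$ at every prime, including the delicate Earnest--Hsia computations at $p=2$ --- and then use Schulze-Pillot's theorem on exceptions of the form $tq^2$ to produce infinitely many integers $\frac{\rad(dN)'q^2-\epsilon}{2^{\alpha-\beta}}$ not represented by $H$. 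Relatedly, your claim that the conditions involving $\bigl(\frac{-\lambda}{p}\bigr)=-1$, the $5\bmod 8$ binary component, and $\rad(dN)'\equiv Q(\nu)'\bmod 8$ ``force the spinor exceptional class to be a single square class'' misstates their role: in (2) and (3) they are used to show the progression $\epsilon+2^{\alpha-\beta}n$ contains \emph{no} odd primitive spinor exceptions, whereas (4) handles the case where it does, via the identification of any such exception with $m^2\rad(dN)'$.
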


%--------------------Preliminaries--------------------------%
\section{Preliminaries}

Henceforth, the language of quadratic spaces and lattices as in \cite{OM} will be adopted, and the notation will follow that used in \cite{H13}. Any unexplained notation and terminology can be found there and in \cite{OM}.   

The subsequent discussion involves the computation of the spinor norm groups of local integral rotations and the relative spinor norm groups of primitive representations of integers by ternary quadratic forms.   The formulae for all these computations can be found in \cite{EH75}, \cite{EH78}, \cite{EH94}, and \cite{H75}.  A correction of some of these formulae can be found in \cite[Footnote 1]{CO09}.  Following the notation set forth in \cite[\S 55]{OM}, the symbol $\theta$ always denotes the spinor norm map.  If $t$ is an integer represented primitively by $\gen(K)$ and $p$ is a prime, then $\theta^*(K_p, t)$ is the primitive relative spinor norm group of the $\Z_p$-lattice $K_p$. If $E$ is a quadratic extension of $\mathbb Q$, $\N_p(E)$ denotes the group of local norms from $E_{\mathfrak p}$ to $\mathbb Q_p$, where $\mathfrak p$ is an extension of $p$ to $E$.  

\begin{lemma}\label{L0}
 If $H(x)$ is almost universal, then $N_p$ represents all of $\Z_p$ whenever $p$ is odd, and consequently, 
\begin{enumerate}[(1)]
\item$M_p\cong \lan1,-1,-dM \ran$ and $\theta(O^+(M_p))\supseteq \Z_p^\times$ for all odd primes $p$; 
\item$\Q_2M=\Q_2N$ is anisotropic; and, 
\item if $t$ is a primitive spinor exception of $\gen(M)$, then $E:=Q(\sqrt{-tdM})$ is either $Q(\sqrt{-1})$ or $\Q(\sqrt{-2})$.
\end{enumerate}
\end{lemma}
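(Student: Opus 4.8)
The strategy is to first convert the hypothesis that $H$ is almost universal into the local statement that $N_p$ represents all of $\Z_p$ for every odd $p$, and then to read off (1)--(3) from the local structure of ternary $\Z_p$-lattices, from Hilbert reciprocity, and from the theory of primitive spinor exceptions. For the first step, note that $2^\alpha a+Q(\nu)=Q(\nu+x)$ for some $x\in N$ exactly when $a$ is represented by $H$, so $a$ is represented by $H$ if and only if $2^\alpha a+Q(\nu)$ is represented by the coset $\nu+N$. Fix an odd prime $p$. By \eqref{R1} we have $2\nu\in N$, and since $2\in\Z_p^\times$ this gives $\nu\in N_p$, so $M_p=N_p$ and the coset $\nu+N_p$ is just $N_p$. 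Now fix $c\in\Z_p$ and $k\geq 1$. Since $2^\alpha\in\Z_p^\times$, the congruence $2^\alpha a\equiv c-Q(\nu)\pmod{p^k}$ is solvable by integers $a$ in a fixed residue class mod $p^k$, hence by arbitrarily large positive integers $a$; as $H$ is almost universal we may choose such an $a$ that is represented by $H$, and then $2^\alpha a+Q(\nu)\in Q(N_p)$ with $2^\alpha a+Q(\nu)\equiv c\pmod{p^k}$. Letting $k\to\infty$ and using that $Q(N_p)$ is closed in $\Z_p$ (it is the continuous image of the compact set $N_p$), we conclude $c\in Q(N_p)$, so $N_p$ represents all of $\Z_p$ for every odd prime $p$.

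For (1), fix an odd prime $p$; then $M_p=N_p$ represents all of $\Z_p$. Representing every unit forces the Jordan splitting of $M_p$ to contain a unimodular component of rank at least two; a short case analysis---immediate when $M_p$ is unimodular, using the classification of unimodular $\Z_p$-lattices by rank and determinant, and otherwise using that $M_p$ represents $p$ to force that binary unimodular component to be a hyperbolic plane---then yields $M_p\cong\lan 1,-1,-dM\ran$. The inclusion $\theta(O^+(M_p))\supseteq\Z_p^\times$ now follows from the standard local spinor norm computation for a $\Z_p$-lattice, $p$ odd, possessing a unimodular Jordan component of rank at least two (cf.\ \cite[\S 92]{OM} and \cite{EH75}): the symmetries supported on the hyperbolic plane already realize every unit square class.

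For (2), $\Q_2M=\Q_2N$ because $\nu\in\Q N$. By the first step $\Q_pM$ is isotropic for every odd prime $p$, since an anisotropic ternary quadratic space over $\Q_p$ cannot represent all of $\Z_p$; and $\Q_\infty M$ is anisotropic because $Q$ is positive definite. Since a positive definite ternary quadratic space over $\Q$ is anisotropic at an odd number of finite primes (the Hilbert reciprocity argument recalled in the introduction), the only finite place at which $\Q M$ can be anisotropic is $p=2$; that is, $\Q_2M$ is anisotropic. For (3), if $t$ is a primitive spinor exception of $\gen(M)$ then $E=\Q(\sqrt{-t\,dM})$ is a quadratic field and $\theta^*(M_v,t)\subseteq\N_v(E)$ for every place $v$. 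For odd $p$ we have $\theta^*(M_p,t)\supseteq\theta(O^+(M_p))\supseteq\Z_p^\times$ by (1), so $\N_p(E)\supseteq\Z_p^\times$; since for odd $p$ the unit norms from a ramified quadratic extension of $\Q_p$ form an index-two subgroup of $\Z_p^\times$, no odd prime ramifies in $E$. Hence $E$ is unramified outside $\{2,\infty\}$, so $E\in\{\Q(\sqrt{-1}),\Q(\sqrt 2),\Q(\sqrt{-2})\}$; finally $t>0$ and $dM>0$ by positive definiteness, so $-t\,dM<0$ and $E$ is imaginary, which rules out $\Q(\sqrt 2)$.

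The main obstacle is the first step: pinning down the approximation argument that upgrades ``$H$ represents all sufficiently large positive integers'' to ``$N_p$ represents all of $\Z_p$'', together with the local classification and the spinor norm computation underlying (1). Once those are in place, (2) is immediate from reciprocity, and (3) is a formal consequence of the behaviour of local norm groups at odd primes.
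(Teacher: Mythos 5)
Your argument is correct, but it takes a different route from the paper only in the sense that the paper does not argue at all: its proof of this lemma is a one-line citation to Chan--Oh \cite{CO09} (Lemma 2.1 for parts (1)--(2), Lemma 2.3 for part (3)), whereas you reconstruct those results from scratch. Your reconstruction follows the expected lines: the $p$-adic approximation step (using $2,2^\alpha\in\Z_p^\times$, $\nu\in N_p$, density of a residue class of large integers among values of $H$, and compactness of $Q(N_p)$) is exactly the standard way such statements are proved; the Jordan-splitting analysis giving $M_p\cong\lan1,-1,-dM\ran$ and $\theta(O^+(M_p))\supseteq\Z_p^\times$, the Hilbert-reciprocity count of anisotropic places, and the ramification argument forcing $E$ to be imaginary and unramified outside $2$ are all sound. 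One small imprecision worth fixing: in the rank-two unimodular case, representing $p$ alone does not force the binary unimodular component $U$ to be hyperbolic (e.g.\ $U$ anisotropic with complement $\lan p w\ran$, $w$ a square, still represents $p$); what you need, and what you have since $M_p$ represents all of $\Z_p$, is that both unit square classes of valuation-one elements $pu$ are represented, which does rule out anisotropic $U$. With that adjustment the case analysis closes, so the proposal is a complete, self-contained proof of what the paper outsources; the citation buys brevity, your version buys independence from \cite{CO09}.
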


\begin{proof}
For (1) and (2), see \cite[Lemma 2.1]{CO09}; for (3) see \cite[Lemma 2.3]{CO09}.
\end{proof}

As an immediate consequence of Lemma \ref{L0}, if $H(x)$ is almost universal, then $\beta<\alpha$, or else $\Q_2N$ is isotropic.  Furthermore, $Q(2\nu)+2B(\nu,2\nu)\in 2^{\beta+3}\Z$, and consequently,  $\beta<\alpha\leq \beta+3$. 

If $H(x)$ is almost universal, then by Lemma \ref{L0}, $\n(N_p)=\Z_p$ for all odd $p$.  Consequently, if $H(x)$ is almost universal, then $\n(N)\subseteq 2\Z$, since $\n(\nu,N)=2^\alpha\Z$ for $\alpha>0$.  Suppose that $\s(N_2)=\Z_2$.  Then, since $N_2$ is anisotropic by Lemma \ref{L0}, and since $\n(N_2)\not\subseteq \s(N_2)$ the bilinear form on $N_2$ must represent a unit in $\Z_2^\times$, hence it represents an element in $\Z_2$ which is not represented by the quadratic form on $N_2$.  Consequently, $N_2\cong\mathbb{A}\perp4\lan\eta\ran$ in the basis $\{e_1,e_2,e_3\}$, where $\eta\in \Z_2^\times$ and $\mathbb{A}:=A(2,2)$ (cf. \cite[93:11]{OM}).  But then, $M_2\cong\mathbb{A}\perp\lan\epsilon\ran$ in the basis $\{e_1,e_2,\nu\}$ fails to represent all $\epsilon+4\delta$ with $\delta\in \Z_2^\times$.  Therefore, we may assume that $\s(N_2)\subseteq 2\Z_2$.

\begin{lemma}\label{F0}
If $N_p$ represents all of $\Z_p$ whenever $p$ is odd, then
\[ 2^{\beta+1}\Z_2\subseteq B(\nu,N_2)\subseteq 2^{\beta-1}\Z_2.
\]
\end{lemma}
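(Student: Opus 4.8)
The plan is to bound $B(\nu,N_2)$ from both sides using the constraints (\ref{R1}), (\ref{R2}), (\ref{R3}) together with the structural facts just established, namely that $H(x)$ almost universal forces $\beta<\alpha\le\beta+3$, $\n(N_2)\subseteq 2\Z_2$, $\s(N_2)\subseteq 2\Z_2$, and $\Q_2N$ anisotropic. For the upper bound $B(\nu,N_2)\subseteq 2^{\beta-1}\Z_2$, I would argue by contradiction: suppose $B(\nu,x)$ has $2$-adic valuation $\le\beta-2$ for some $x\in N$. Then in the coset $\nu+N$ the value $Q(\nu+x)=Q(\nu)+2B(\nu,x)+Q(x)$ has $\ord_2$ controlled by the middle term $2B(\nu,x)$, whose valuation is $\le\beta-1$, strictly below $\beta=\ord_2(Q(\nu))$ and below $\ord_2(Q(x))$ since $\n(N)\subseteq 2\Z$ would not be enough — here I need to be a little more careful and instead look at $Q(x)+2B(\nu,x)$ directly, which is a generator-type element of $\n(\nu,N)=2^\alpha\Z_2$, so $\ord_2(Q(x)+2B(\nu,x))\ge\alpha>\beta$; combined with $\ord_2(Q(x))\ge 1$ this pins $\ord_2(2B(\nu,x))$ and ultimately shows $2B(\nu,x)\in 2^{\min(\alpha,\,?)}\Z_2$, forcing $\ord_2(B(\nu,x))\ge\beta-1$.

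More precisely, the cleanest route: from $2\nu\in N$ we get $Q(2\nu)+2B(\nu,2\nu)=4Q(\nu)+4B(\nu,\nu)=4Q(\nu)+2Q(2\nu)/?$ — rather, $2B(\nu,2\nu)=4B(\nu,\nu)=4Q(\nu)=2^{\beta+2}\epsilon$, and since $Q(2\nu)=4Q(\nu)=2^{\beta+2}\epsilon$ as well, the element $Q(2\nu)+2B(\nu,2\nu)=2^{\beta+3}\epsil'$ lies in $\n(\nu,N)=2^\alpha\Z_2$, recovering $\alpha\le\beta+3$. For a general $x$, write $y=2\nu+x'$-type combinations or, more directly, use $2B(\nu,x)=\bigl(Q(x+2\nu)+2B(\nu,x+2\nu)\bigr)-\bigl(Q(x)+2B(\nu,x)\bigr)-\bigl(Q(2\nu)+2B(\nu,2\nu)\bigr)$ wait — I would instead just note $B(\nu,x)=\tfrac12 B(2\nu,x)$ and $2\nu\in N$, so $B(2\nu,x)\in\s(N_2)$; hence $B(\nu,x)\in\tfrac12\s(N_2)\subseteq\Z_2$ by (\ref{R2}), giving only $B(\nu,N_2)\subseteq\Z_2$, not the sharp $2^{\beta-1}$. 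To sharpen to $2^{\beta-1}\Z_2$ I need to feed in $\beta\ge 1$ and the anisotropy: if $B(\nu,N_2)=2^{\beta-2}u\Z_2$ with $u$ a unit and $\beta-2<\beta$, then picking $x$ with $\ord_2 B(\nu,x)=\beta-2$ makes $\ord_2\bigl(Q(x)+2B(\nu,x)\bigr)$ equal $\beta-1$ unless $Q(x)$ cancels it, i.e. $\ord_2 Q(x)=\beta-1$; but then $\{x,\nu\}$ spans a binary sublattice of $M_2$ with scale $\ge 2^{\beta-2}$ and norm exactly $2^{\beta-1}$ on both generators while $\ord_2(\text{det})$ is odd-ish, forcing it to be isotropic or unimodular-scaled, contradicting anisotropy of $\Q_2N$ (via the classification in \cite[93:11]{OM}). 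The lower bound $2^{\beta+1}\Z_2\subseteq B(\nu,N_2)$ is easier: if $B(\nu,x)\in 2^{\beta+2}\Z_2$ for all $x$, then $Q(\nu+x)=Q(\nu)+2B(\nu,x)+Q(x)\equiv 2^\beta\epsilon+Q(x)\pmod{2^{\beta+3}}$, and since $N_2$ is anisotropic with $\n(N_2)\subseteq 2\Z_2$ one shows the values $Q(x)$ miss the residue needed to make $2^\alpha\mid Q(x)+2B(\nu,x)$ for a full square class, again contradicting almost universality via the local obstruction at $2$.

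The main obstacle I anticipate is the upper bound: ruling out $\ord_2(B(\nu,x))=\beta-2$ requires genuinely using that $\Q_2N$ is anisotropic (Lemma \ref{L0}(2)), and this means carefully producing an explicit binary (or ternary) Jordan splitting of $M_2$ or $N_2$ from the hypothetical $x$ and $\nu$ and checking against O'Meara's anisotropy criterion \cite[63:\,etc.]{OM} — the bookkeeping of scales and norms mod powers of $2$, and the fact that $N_2$ may be non-diagonalizable (so $\A=A(2,2)$ blocks can appear), is where the argument gets delicate. The secondary subtlety is that (\ref{R2}) is used twice in slightly different guises (to get $B(\nu,N)\subseteq\Z$ and $Q(\nu)\in\Z$), and one must be sure the sharp exponents $\beta\pm 1$ are what (\ref{R2}) actually delivers, not off by one. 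I would organize the write-up as: (i) upper bound via anisotropy, (ii) lower bound via the local representation obstruction, each a short contradiction argument, with the $\A$-component case split out explicitly in step (i).
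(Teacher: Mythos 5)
Your proposal has the germ of a workable upper-bound argument, but as written there are two genuine gaps. First, the lower bound: the lemma's hypothesis is only that $N_p$ represents all of $\Z_p$ at odd primes, so your appeal to ``contradicting almost universality via the local obstruction at $2$'' invokes a hypothesis you do not have, and in any case you never actually produce the obstruction. No contradiction argument is needed: since $2\nu\in N$, the ideal $B(\nu,N_2)$ contains $B(\nu,2\nu)=2Q(\nu)\in 2^{\beta+1}\Z_2^\times$, which is exactly the paper's one-line proof of the left-hand containment -- you even write down this identity when re-deriving $\alpha\le\beta+3$, but never deploy it where it is needed. Second, the upper bound: your cancellation step is correct (if $\ord_2 B(\nu,x)=k\le\beta-2$, then $Q(x)+2B(\nu,x)\in 2^\alpha\Z$ with $\alpha>\beta$ forces $\ord_2 Q(x)=k+1$), but the contradiction is never actually closed. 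The stated properties of $\Z_2[\nu,x]$ are wrong or inconclusive: $Q(\nu)$ has valuation $\beta$, not $\beta-1$; $\ord_2(\det)=2k$ is even, not ``odd-ish''; and ``isotropic or unimodular-scaled'' is not a dichotomy that yields a contradiction (a scaled $\A$ is anisotropic). The missing step is the discriminant computation: $\det\Z_2[\nu,x]=Q(\nu)Q(x)-B(\nu,x)^2=-B(\nu,x)^2\bigl(1-Q(\nu)Q(x)B(\nu,x)^{-2}\bigr)$, and since $\ord_2\bigl(Q(\nu)Q(x)B(\nu,x)^{-2}\bigr)=\beta-k+1\ge 3$, the parenthetical factor is $\equiv 1\bmod 8$, hence a square by the Local Square Theorem \cite[63:1]{OM}; so the binary space spanned by $\nu$ and $x$ is a hyperbolic plane, contradicting the anisotropy of $\Q_2N$ furnished by Lemma \ref{L0}.

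With that repair your route is genuinely different from the paper's, and arguably shorter: the paper instead deduces from the hypothetical containment that $\n(N_2)=2^i\Z_2$ with $i<\beta$, splits into the cases $\s(N_2)=\n(N_2)$ (a diagonal component $\lan 2^i\eta\ran$) and $2\s(N_2)=\n(N_2)$ (a component $2^{i-1}\A$), and in each case exhibits an explicit basis vector $e_1$ with $\ord_2\bigl(Q(e_1)+2B(\nu,e_1)\bigr)<\alpha$, violating (\ref{R3}); anisotropy enters only through the parity condition in the $\A$ case. Your discriminant argument avoids the Jordan-splitting case analysis entirely, at the price of the mod-$8$ bookkeeping above -- but that bookkeeping is precisely what your write-up omits, so as it stands the proposal does not prove the lemma.
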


\begin{proof}
Suppose that $N_p$ represents all of $\Z_p$ whenever $p$ is odd, and therefore $N_2$ is anisotropic by Lemma \ref{L0}.  The left-hand containment is obvious since $B(\nu,N_2)$ contains $2Q(\nu)\in 2^{\beta+1}\Z_2^\times$. For the sake of contradiction, suppose that the right-hand containment does not hold, that is, suppose that $2^{\beta-2}\Z_2\subseteq B(\nu,N_2)$. Since $\alpha>\beta$ by Lemma \ref{L0}, this implies that $\n(N_2)=2^i\Z_2$, where $i<\beta$. 

First suppose that $\s(N_2)=\n(N_2)$.  Then $N_2\cong\lan2^i\eta\ran\perp K_2$ in a basis $\{e_1,e_2,e_3\}$, where $\eta\in \Z_2^\times$, and $K_2$ is some binary lattice. Then $\nu=\frac{ae_2+be_2+ce_3}{2}$, where $a,b,c\in \Z$, and $Q(e_1)+2B(\nu,e_1)=2^i\eta(1+a^2)$. But since $i<\beta<\alpha$, this value is never in $2^\alpha \Z$, regardless of the parity of $a$, contradicting assumption (\ref{R3}). 

Next, we suppose that $2\s(N_2)=\n(N_2)$, then $N_2\cong2^{i-1}\A\perp\lan2^j\eta\ran$ in a basis $\{e_1,e_2,e_3\}$, where $i-1< j$.  Since $N_2$ is anisotropic we may conclude that $i-1\equiv j\mod 2$, so in particular $i+1\leq j$.  Letting $\nu=\frac{ae_1+be_2+ce_3}{2}$ with $a,b,c\in \Z$, we get 
\[
Q(\nu)=2^{i-2}(a^2+b^2+ab)+2^{j-2}\eta c^2\in 2^\beta\Z_2^\times.
\]
Since $i-2<\beta$ and since $i-2< j-2$, we may conclude that both $a$ and $b$ are even, and since $j-2\neq i$ we may go further to say that at least one of $a$ or $b$ is congruent to $0\mod 4$. Without loss of generality, we will suppose that $b\equiv 0\mod 4$, then $Q(e_1)+2B(\nu,e_1)=2^{i-1}[2(1+a)+b]\in 2^{i}\Z_2^\times$, contradicting (\ref{R3}). 
\end{proof}

\begin{lemma}\label{L3}
Let $\omega=\nu+x_0$, where $x_0\in N$, and define $H'(x)=\frac{1}{2^\alpha}[Q(x)+2B(\omega,x)]$.  Then, 
\begin{enumerate}[(1)]
\item if $\n(\nu,N)=2^\alpha \Z$, then $\n(\omega, N)=2^\alpha \Z$; and,
\item if $H(x)$ is almost universal, then $H'(x)$ is almost universal.  
\end{enumerate}
\end{lemma}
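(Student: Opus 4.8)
The plan is to prove the two parts essentially independently, with part (1) being a direct computation and part (2) following from it together with a reduction identifying the integers represented by $H$ and by $H'$.

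For part (1), I would unwind the definitions. By construction $\n(\omega,N)$ is the $\Z$-ideal generated by the values $Q(x)+2B(\omega,x)$ for $x\in N$. Writing $\omega=\nu+x_0$ with $x_0\in N$ gives
\[
Q(x)+2B(\omega,x)=Q(x)+2B(\nu,x)+2B(x_0,x).
\]
Since $x_0\in N$ and $x\in N$, we have $2B(x_0,x)\in 2\s(N)\subseteq\Z$; in fact the key point is that $2B(x_0,x)$ lies in $\n(N)+2\s(N)$, which by assumption \eqref{R3} (together with the fact, noted in the excerpt, that almost universality forces $\n(N)\subseteq 2\Z$ — though here I only need the a priori inclusion $2B(x_0,x)\in 2\s(N)$ and the membership in $\Z$ from \eqref{R2}) is contained in a controlled power of $2$. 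The cleanest route: replacing $x$ by $x+x_0$ in the generator $Q(\,\cdot\,)+2B(\nu,\,\cdot\,)$ shows that the set $\{Q(x)+2B(\omega,x):x\in N\}$ differs from $\{Q(x)+2B(\nu,x):x\in N\}$ only by the constant $Q(x_0)+2B(\nu,x_0)\in\n(\nu,N)=2^\alpha\Z$; more precisely $Q(x+x_0)+2B(\nu,x+x_0)=\bigl(Q(x)+2B(\omega,x)\bigr)+\bigl(Q(x_0)+2B(\nu,x_0)\bigr)$. Hence the ideal generated by the left side, which is $2^\alpha\Z$, equals the ideal generated by $\{Q(x)+2B(\omega,x)\}$ shifted by an element of $2^\alpha\Z$, and a short argument on ideals generated by a coset-shifted set of integers (the shift lies in the ideal itself) gives $\n(\omega,N)=2^\alpha\Z$.

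For part (2), the identity just displayed is again the engine. It shows that for every $x\in N$,
\[
2^\alpha H'(x)=Q(x)+2B(\omega,x)=Q(x+x_0)+2B(\nu,x+x_0)-\bigl(Q(x_0)+2B(\nu,x_0)\bigr)=2^\alpha H(x+x_0)-2^\alpha k,
\]
where $k:=\dfrac{Q(x_0)+2B(\nu,x_0)}{2^\alpha}\in\Z$ by part (1). Therefore $H'(x)=H(x+x_0)-k$ for all $x\in N$, so as $x$ ranges over $N$ the values of $H'$ are exactly the values of $H$ translated by the fixed integer $-k$. Consequently $H'$ represents all but finitely many positive integers if and only if $H$ does: if $H$ represents every integer $\geq M_0$, then $H'$ represents every integer $\geq M_0-k$ (using that $x_0$ is a fixed vector, so the map $x\mapsto x+x_0$ is a bijection of $N$), and in particular $H'$ is almost universal. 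This gives (2).

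I do not expect a serious obstacle here; the only point requiring care is the bookkeeping in part (1) — making sure that the coset-shift argument for ideals is stated correctly, i.e. that if $S\subseteq\Z$ and $c\in(S)$ (the ideal generated by $S$), then $(S)=(S+c)=(\{s+c:s\in S\})$, which is immediate since $S+c\subseteq(S)$ and $S=(S+c)-c\subseteq(S+c)$ as $c\in(S)=(S+c)$. Establishing first that the ambient values lie in $\Z$ (via \eqref{R2}) so that it makes sense to speak of these integer ideals is the one place where the standing assumptions are genuinely used.
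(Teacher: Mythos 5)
Your argument is correct in substance and is the natural one: the paper gives no inline proof, deferring to \cite[Lemma 3]{H13}, and the translation identity $Q(x)+2B(\omega,x)=\bigl(Q(x+x_0)+2B(\nu,x+x_0)\bigr)-\bigl(Q(x_0)+2B(\nu,x_0)\bigr)$, with the resulting relation $H'(x)=H(x+x_0)-H(x_0)$, is exactly the mechanism that proof rests on, so you are on the same route rather than a genuinely different one.

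One repair is needed in part (1): the auxiliary claim you invoke --- that $c\in(S)$ implies $(S)=(\{s+c:s\in S\})$ for an arbitrary $S\subseteq\Z$ --- is false in general (take $S=\{2\}$ and $c=2$: the shifted set generates $4\Z$, not $2\Z$), and your justification of it is circular (``as $c\in(S)=(S+c)$'' assumes the conclusion). In the case at hand the needed equality does hold, but for a concrete reason you should state: the value set $S=\{Q(y)+2B(\nu,y):y\in N\}$ contains $0$ (take $y=0$), so the shifted set contains $-c$; equivalently, evaluating $Q(x)+2B(\omega,x)$ at $x=-x_0$ gives $-\bigl(Q(x_0)+2B(\nu,x_0)\bigr)$. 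Hence $c\in\n(\omega,N)$, and then every $s\in S$ equals $(s-c)+c\in\n(\omega,N)$, giving $\n(\nu,N)\subseteq\n(\omega,N)$, while the reverse inclusion is immediate since each $s-c$ lies in $\n(\nu,N)=2^\alpha\Z$. With that one line inserted, part (1) is complete, and part (2) as you wrote it is fine: $H'(x)=H(x+x_0)-k$ with $k=H(x_0)\in\Z$ by part (1), so the value sets of $H'$ and $H$ differ by a fixed integer translation and almost universality transfers.
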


\begin{proof}
See \cite[Lemma 3]{H13} 
\end{proof}

\begin{remark}\label{CL3}
The rest of this paper is to demonstrate the almost universality of $H(x)$ under some arithmetic conditions imposed on $\nu$ and $N$.  In view of Lemma \ref{L3}, we can always change $\nu$ to $\nu+x_0$ with $x_0\in N$, as long as the arithmetic conditions are unchanged. 
\end{remark}

In view of Lemma \ref{L3} and Remark \ref{CL3}, once we've fixed a basis $\{e_1,e_2,e_3\}$ for $N_2$, it is always possible to write $\nu=\frac{ae_1+be_2+ce_3}{2}$ where $0\leq a,b,c\leq 1$.  Given our assumptions \eqref{R1}, \eqref{R2}, \eqref{R3} and Lemma \ref{L0}, we now establish some arithmetic conditions on $\n(N)$, $\s(N)$ and $B(\nu,N_2)$.  In what follows, $\eta, \gamma,\mu$ always denote units in $\Z_2^\times$. 

\begin{lemma}\label{scale}
If $H(x)$ is almost universal and $\n(N_2)=2^{\beta+1}\Z_2$, then $\s(N_2)=\n(N_2)$. 
\end{lemma}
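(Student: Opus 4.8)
The plan is to prove the contrapositive of the relevant implication: assuming $H$ is almost universal while $\s(N_2)\neq\n(N_2)$, I would exhibit a full residue class modulo $4$ that $H$ fails to represent. Since one always has $2\s(N_2)\subseteq\n(N_2)\subseteq\s(N_2)$ (from $Q(x)=B(x,x)$ and $2B(x,y)=Q(x+y)-Q(x)-Q(y)$), the hypothesis $\n(N_2)=2^{\beta+1}\Z_2$ leaves as the only alternative to $\s(N_2)=\n(N_2)$ the possibility $\s(N_2)=2^{\beta}\Z_2$, so that is what I would assume throughout.

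The first task is to pin down $N_2$ and the position of $\nu$. The leading Jordan constituent of $N_2$ is $2^\beta$-modular, and since $\n(N_2)=2^{\beta+1}\Z_2\neq 2^\beta\Z_2$ it must be even, hence of even rank; being an orthogonal summand of the rank-three lattice $N_2$, which is anisotropic over $\Q_2$ by Lemma \ref{L0}(2), it is binary, and since the even unimodular plane $A(2,0)$ is isotropic it equals $2^\beta\A$. Thus $N_2\cong 2^\beta\A\perp\lan 2^j\eta\ran$ in a basis $\{e_1,e_2,e_3\}$ with $j>\beta$ and $\eta\in\Z_2^\times$. Writing $2\nu=pe_1+qe_2+re_3$, the quantities $Q(e_1)+2B(\nu,e_1)$ and $Q(e_2)+2B(\nu,e_2)$ evaluate to $2^\beta(2(1+p)+q)$ and $2^\beta(p+2(1+q))$; since $\alpha>\beta$ by Lemma \ref{L0} and these lie in $2^\alpha\Z_2$ by \eqref{R3}, one gets $p,q\in 2\Z_2$. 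Then $Q(\nu)=2^{\beta-1}(p^2+pq+q^2)+2^{j-2}\eta r^2$ has its first summand in $2^{\beta+1}\Z_2$, so $\ord_2 Q(\nu)=\beta$ forces $j=\beta+2$ and $r\in\Z_2^\times$. Consequently $\nu+N_2=\tfrac12 e_3+N_2$, and after computing $\n(\tfrac12 e_3,N_2)=2^{\beta+1}\Z_2$ and appealing to Lemma \ref{L3}(1) one has $\alpha=\beta+1$.

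The obstruction then emerges from a congruence count at the prime $2$. For $x=x_1e_1+x_2e_2+x_3e_3\in N_2$ one expands
\[
Q(\nu+x)=2^{\beta+1}(x_1^2+x_1x_2+x_2^2)+2^{\beta+2}\eta\,x_3(x_3+1)+2^{\beta}\eta .
\]
Because $x_3(x_3+1)$ is even the middle term lies in $2^{\beta+3}\Z_2$, and because the Eisenstein value $x_1^2+x_1x_2+x_2^2$ is either a unit or divisible by $4$, the quantity $2^{-\beta}Q(\nu+x)$ is never congruent to $\eta+4$ modulo $8$. On the other hand $H$ represents an integer $a$ exactly when the coset $\nu+N$ represents $Q(\nu)+2^{\beta+1}a=2^{\beta}(\epsilon+2a)$; choosing $a$ in the residue class modulo $4$ with $\epsilon+2a\equiv\eta+4\pmod 8$ (possible since $\eta+4$ is odd), the integer $2^{\beta}(\epsilon+2a)$ is not even a value of $Q$ on $\nu+N_2$, hence is not represented by $\nu+N$. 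So $H$ omits an entire residue class modulo $4$, contradicting almost universality.

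I expect the second paragraph to be the delicate part: extracting from \eqref{R3} and $\ord_2 Q(\nu)=\beta$ that $N_2$ is forced to be $2^\beta\A\perp\lan 2^{\beta+2}\eta\ran$ with $\nu$ congruent to $\tfrac12 e_3$, and handling the replacement of $\nu$ by a lattice vector so that the conductor data and almost universality are preserved (this is exactly what Lemma \ref{L3} and Remark \ref{CL3} are for). Once the shape is fixed, the final congruence argument is elementary and purely local at $2$, the only point worth isolating being that the binary form $x_1^2+x_1x_2+x_2^2$ represents no residue $\equiv 2\pmod 4$.
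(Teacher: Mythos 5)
Your proof is correct and takes essentially the same route as the paper: assume $\s(N_2)=2^\beta\Z_2$, force $N_2\cong 2^\beta\A\perp\lan 2^{\beta+2}\eta\ran$ with $\nu\equiv\frac{1}{2}e_3$ modulo $N_2$, and then exhibit a full $2$-adic congruence class of target values that the coset cannot represent. The paper packages the final obstruction as $M_2\cong 2^\beta\A\perp\lan 2^\beta\epsilon\ran$ failing to represent $2^\beta\epsilon+2^{\beta+j}\gamma$ for $j$ even (without pinning down $\alpha=\beta+1$ as you do), but the underlying computation is the same.
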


\begin{proof}
Suppose that $\n(N_2)=2^{\beta+1}\Z_2$, and for the sake of contradiction, we will suppose that $\s(N_2)=2^\beta\Z_2$.  Then $N_2\cong2^\beta\A\perp\lan2^i\eta\ran$ in a basis $\{e_1,e_2,e_3\}$, where $\beta<i$. But since $N_2$ must represent $Q(2\nu)\in 2^{\beta+2}\Z_2^\times$, it follows that $i=\beta+2$.  But now $M_2\cong2^\beta \A\perp \lan2^\beta\epsilon\ran$ in a basis $\{e_1,e_2,\nu\}$.  But $M_2$ fails to represent all $2^\beta\epsilon+2^{\beta+j}\gamma$ for every $\gamma\in\Z_2^\times$, and $j$ even. Therefore $H(x)$ is not almost universal. 
\end{proof}

\begin{lemma}
If $H(x)$ is almost universal, then $B(\nu,N_2)=B(\omega,N_2)$ for any $\omega\in \nu+N$.  
\end{lemma}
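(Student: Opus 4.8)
The plan is to show that changing $\nu$ to $\omega = \nu + x_0$ for some $x_0 \in N$ does not enlarge or shrink the ideal $B(\nu, N_2)$, given the almost universality hypothesis. First I would write out the defining relation: for any $x \in N_2$,
\[
B(\omega, x) = B(\nu, x) + B(x_0, x).
\]
Thus $B(\omega, N_2) \subseteq B(\nu, N_2) + B(x_0, N_2)$, and since $B(x_0, N_2) \subseteq \mathfrak{s}(N_2)$, it suffices to compare $B(\nu, N_2)$ with $\mathfrak{s}(N_2)$. By the discussion preceding Lemma~\ref{F0} we know $\mathfrak{s}(N_2) \subseteq 2\Z_2$, and by Lemma~\ref{F0} we know $B(\nu, N_2) \supseteq 2^{\beta+1}\Z_2$; combined with $\alpha \le \beta + 3$ from the consequence of Lemma~\ref{L0}, the orders in play are tightly constrained. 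The symmetric argument (swapping the roles of $\nu$ and $\omega$, using $\nu = \omega + (-x_0)$) then gives the reverse containment, so it is enough to establish $B(\omega, N_2) \subseteq B(\nu, N_2)$, i.e. to show $\mathfrak{s}(N_2) \subseteq B(\nu, N_2)$ whenever $B(x_0, N_2)$ would otherwise be ``too large.''

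The key step is therefore to rule out the scenario $\mathfrak{s}(N_2) \not\subseteq B(\nu, N_2)$, i.e. $\mathfrak{s}(N_2) = 2^{\beta}\Z_2$ with $B(\nu, N_2) \subseteq 2^{\beta+1}\Z_2$ (or the analogous $2^{\beta-1}$ vs. $2^{\beta}$ comparison). Here I would argue as in the proofs of Lemma~\ref{scale} and Lemma~\ref{F0}: put $N_2$ into Jordan form adapted to its scale, $N_2 \cong 2^{\beta-1}\A \perp \lan 2^j\eta\ran$ or $2^{\beta-1}\lan \eta_1,\eta_2\ran \perp \lan 2^j\eta_3\ran$ depending on whether the leading component is the form $\A = A(2,2)$ or diagonal, write $\nu = \frac{ae_1 + be_2 + ce_3}{2}$ with $0 \le a,b,c \le 1$ (legitimate by Remark~\ref{CL3}), and compute $Q(\nu)$, $B(\nu, e_i)$, and $\mathfrak{n}(\nu, N)$ explicitly. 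The hypotheses $Q(\nu) \in 2^\beta\Z_2^\times$ and $\mathfrak{n}(\nu, N) = 2^\alpha\Z_2$ with $\alpha > \beta$ force strong parity conditions on $a, b, c$; anisotropy of $N_2$ (Lemma~\ref{L0}(2)) forces a congruence between the exponents of the Jordan blocks. One then exhibits, just as in Lemma~\ref{scale}, an element represented by $Q(2\nu) + 2B(\nu, 2\nu)$-type considerations, or a sublattice of $M_2$ of the form $2^{\beta-1}\A \perp \lan 2^{\beta-1}\epsilon'\ran$ (or its diagonal analogue), that omits an entire residue class of the form $2^{\beta-1}\epsilon' + 2^{\beta-1+2k}\gamma$, contradicting almost universality of $H$ via the local representation obstruction.

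The main obstacle I anticipate is the bookkeeping across the several Jordan-type cases for $N_2$ (diagonal vs. containing an $\A$ or a hyperbolic plane, and the various possible exponents $j$ relative to $\beta$) together with tracking how the parity of $a, b, c$ interacts with each case; each sub-case needs its own small computation showing either that assumption~\eqref{R3} fails or that $M_2$ misses a square class. A secondary subtlety is making sure the reduction $0 \le a, b, c \le 1$ via Remark~\ref{CL3} is compatible with the very change of vector $\nu \leadsto \omega$ we are analyzing — but since we only need the \emph{ideal} $B(\nu, N_2)$, which by Lemma~\ref{L3}(1) is computed after such a reduction without loss of generality, this causes no circularity. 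Once the bad scenario is excluded we have $\mathfrak{s}(N_2) \subseteq B(\nu, N_2)$, hence $B(\omega, N_2) \subseteq B(\nu, N_2)$, and symmetry closes the argument.
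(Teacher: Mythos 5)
Your plan works, but it is organized differently from the paper's proof. The paper never isolates the containment $\s(N_2)\subseteq B(\nu,N_2)$: it uses Lemma~\ref{L3}(1) to get $\n(\omega,N)=\n(\nu,N)=2^\alpha\Z$, writes $B(\nu,N_2)=2^{\beta+i}\Z_2$ with $i\in\{-1,0,1\}$ by Lemma~\ref{F0}, determines $\n(N_2)$ from the identity $Q(x)+2B(\nu,x)\in 2^\alpha\Z_2$ (with Lemma~\ref{L1.1} supplying $\beta+i+1\leq\alpha$ when $\alpha\neq\beta+1$, and Lemma~\ref{scale} handling the delicate $\alpha=\beta+1$ subcases), and then reads $B(\omega,N_2)$ directly off the same identity applied to $\omega$; no symmetry step is needed. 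Your route---bound the perturbation by $B(x_0,N_2)\subseteq\s(N_2)$, prove $\s(N_2)\subseteq B(\nu,N_2)$, and apply the same statement to $\omega$---is valid, and in fact simpler than your sketch suggests: from $2\s(N_2)\subseteq\n(N_2)\subseteq\s(N_2)$ together with $\n(N_2)\subseteq 2B(\nu,N_2)+2^\alpha\Z_2$ and $\alpha\geq\beta+1$, a failure of $\s(N_2)\subseteq B(\nu,N_2)$ forces $\alpha=\beta+1$, $B(\nu,N_2)=2^{\beta+1}\Z_2$ and $2\s(N_2)=\n(N_2)=2^{\beta+1}\Z_2$, which is exactly the configuration excluded by Lemma~\ref{scale}. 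So no fresh Jordan-form case analysis is needed; almost universality enters only through that citation, through Lemma~\ref{L0} behind Lemma~\ref{F0}, and through Lemma~\ref{L3} when passing to $\omega$. What your organization buys is a clean invariant statement (the scale never exceeds $B(\nu,N_2)$); what the paper's buys is that $B(\omega,N_2)$ is computed outright, avoiding the symmetry step.

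Two points need tightening. First, your anti-circularity remark is not justified as stated: Lemma~\ref{L3}(1) asserts invariance of the norm ideal $\n(\omega,N)$, not of $B(\omega,N_2)$---the latter is precisely what the lemma claims---so you may not normalize $\nu$ via Remark~\ref{CL3} inside a case whose hypotheses include a condition on $B(\nu,N_2)$ and assume that condition survives the normalization. The repair is the observation above: the one decisive exclusion (Lemma~\ref{scale}) uses only coset-independent data ($\n(N_2)$, $\s(N_2)$, anisotropy, and $Q(2\nu)\in 2^{\beta+2}\Z_2^\times$), so no normalization of $\nu$ is required there. Second, the symmetric application requires checking that $\omega$ satisfies the same hypotheses as $\nu$: $\n(\omega,N)=2^\alpha\Z$ and almost universality of $H'$ come from Lemma~\ref{L3}, but you should also record that $\ord_2 Q(\omega)=\beta$, since $Q(\omega)=Q(\nu)+\bigl(Q(x_0)+2B(\nu,x_0)\bigr)\equiv Q(\nu)\ \bmod\ 2^\alpha$ and $\alpha>\beta$; only then do Lemma~\ref{F0} and your containment apply to $\omega$ and give $B(\nu,N_2)\subseteq B(\omega,N_2)+\s(N_2)=B(\omega,N_2)$.
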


\begin{proof}
Suppose that $H(x)$ is almost universal, and let $\omega\in \nu+N$.  Combining (\ref{R3}) and Lemma \ref{L3} we know that $\n(\nu,N)=\n(\omega,N)=2^\alpha\Z$. From Lemma \ref{F0} we know that $B(\nu,N_2)=2^{\beta+i}\Z_2$, where $i=-1,0,1$.  If $\alpha\neq\beta+1$, then $\beta+i+1\leq\alpha$, and thus $2B(\nu,N_2)=2^{\beta+i+1}\Z_2$ implies that $\n(N_2)=2^{\beta+i+1}\Z_2$. Therefore $\n(\omega,N_2)=2^\alpha\Z_2$ implies that $B(\omega,N)=2^{\beta+i}\Z_2$, and hence $B(\nu,N_2)=B(\omega,N_2)$. 

When $\alpha=\beta+1$ the argument follows as above when $i=-1$.  If $i=0$, then $B(\nu,N_2)=2^{\beta}\Z_2$ implies that $\n(N_2)=2^{\beta+1}\Z_2$ or $2^{\beta+2}\Z_2$.  If $\n(N_2)=2^{\beta+1}\Z_2$, then it follows from Lemma \ref{scale} that $\s(N_2)=2^{\beta+1}\Z_2$.  In this case, $B(\omega,x)=B(\nu,x)+B(x_0,x)$ for $x_0\in N$ implies that $B(\omega,N_2)=2^\beta\Z_2$.  On the other hand, if $\n(N)=2^{\beta+2}\Z_2$, then $\n(\omega,N_2)=2^{\beta+1}\Z_2$ implies that $B(\omega,N_2)=2^\beta\Z_2$. 

If $i=1$, then $\n(\nu,N_2)=2^{\beta+1}\Z_2$ implies that $\n(N_2)=2^{\beta+1}\Z_2$. Therefore $B(\omega,N_2)\subseteq 2^\beta\Z_2$. But if $B(\omega,N_2)=2^\beta\Z_2$, then this would imply that $B(\nu,x)+B(x_0,x)\in 2^\beta\Z$, which is impossible in view of Lemma \ref{scale}.  Thus $B(\omega,N_2)=2^{\beta+1}\Z_2$. 
\end{proof}

\begin{lemma}\label{L1.1}
If $N_p$ represents all of $\Z_p$ for every odd prime $p$, then the following hold:
\begin{enumerate}[(1)]
\item If $\alpha=\beta+3$, then $B(\nu,N_2)=2^{\beta+1}\Z$; 
\item If $\alpha=\beta+2$, then $B(\nu,N_2)=2^{\beta}\Z$ or $2^{\beta+1}\Z$;  
\end{enumerate}
and in both cases $\n(N)=\s(N)$. 
\end{lemma}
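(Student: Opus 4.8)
The plan is to prove Lemma \ref{L1.1} by combining the bracketing from Lemma \ref{F0} with the already-established fact that $\s(N_2)\subseteq 2\Z_2$ and the anisotropy of $N_2$ (Lemma \ref{L0}), while repeatedly exploiting assumption \eqref{R3} that $\n(\nu,N)=2^\alpha\Z$ sits exactly at the predicted level. Throughout I would fix a basis $\{e_1,e_2,e_3\}$ of $N_2$ adapted to a Jordan splitting and, via Remark \ref{CL3} and Lemma \ref{L3}, normalize $\nu=\tfrac{ae_1+be_2+ce_3}{2}$ with $a,b,c\in\{0,1\}$. The quantity $\n(\nu,N_2)$ is generated by the values $Q(x)+2B(\nu,x)$; on basis vectors these are $Q(e_i)+aB(e_i,e_j)$-type expressions, so the $2$-adic orders of $Q(e_i)$, of the off-diagonal scales, and of $B(\nu,e_i)$ are the only data in play, and the whole argument is a bookkeeping of these orders forced to be consistent with $\ord_2$ of the generator equalling $\alpha$.

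First I would dispose of the containment statements. By Lemma \ref{F0}, $B(\nu,N_2)=2^{\beta+i}\Z_2$ with $i\in\{-1,0,1\}$; I must rule out $i=-1$ when $\alpha\in\{\beta+2,\beta+3\}$, and additionally rule out $i=0$ when $\alpha=\beta+3$. If $i=-1$ then some $B(\nu,e_k)\in 2^{\beta-1}\Z_2^\times$, which forces $2B(\nu,e_k)\in 2^\beta\Z_2^\times$; unless the corresponding $Q(e_k)$ exactly cancels the leading term (which it cannot, since $\n(N_2)\subseteq 2\Z_2$ with $\s(N_2)\subseteq 2\Z_2$ and a more careful look at the Jordan form shows $Q$-values and $2B(\nu,\cdot)$-values cannot collude to reach level $\geq\beta+2$), we get $\ord_2(Q(e_k)+2B(\nu,e_k))\leq\beta+1<\alpha$, contradicting \eqref{R3}. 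The case $i=0,\ \alpha=\beta+3$ is analogous: $2B(\nu,\cdot)$ lives at level $\beta+1$, and one checks the diagonal $Q$-entries and mixed terms cannot push the generator all the way to $2^{\beta+3}\Z$ without first producing some value at level $\beta+1$ or $\beta+2$. Here one must treat separately the subcases $\s(N_2)=\n(N_2)$ (diagonalizable) and $2\s(N_2)=\n(N_2)$ (a scaled $\A$-component present), exactly as in the proof of Lemma \ref{F0}, choosing a test vector $e_k$ that detects the obstruction.

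For the final clause $\n(N)=\s(N)$, it suffices to work $2$-adically (for odd $p$, $N_p$ represents all of $\Z_p$ hence $\n(N_p)=\s(N_p)=\Z_p$). Suppose instead $2\s(N_2)=\n(N_2)$, so $N_2$ has a Jordan component isometric to a scaling of $\A=A(2,2)$, say $N_2\cong 2^{j-1}\A\perp\lan 2^k\eta\ran$. Using the known levels — in case $\alpha=\beta+3$ we have $B(\nu,N_2)=2^{\beta+1}\Z_2$ and $Q(\nu)\in 2^\beta\Z_2^\times$ — I would write $Q(\nu)=2^{j-2}(a^2+b^2+ab)+2^{k-2}\eta c^2$ and run the parity/divisibility analysis of Lemma \ref{F0}: the constraint $\ord_2(Q(\nu))=\beta$ together with $\n(N_2)=2^{j}\Z_2$ (or $2^{j-1}\A$ contributing norm $2^{j-1}$... one tracks the exact value) pins down the parities of $a,b,c$, and then evaluating $Q(e_1)+2B(\nu,e_1)=2^{j-1}[2(1+a)+b]$ shows its $2$-order is strictly below $\alpha$, the same contradiction as before; the case $\alpha=\beta+2$ is handled identically with the two possible values of $B(\nu,N_2)$.

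The main obstacle I expect is the case analysis in the clause $\n(N)=\s(N)$ and in excluding $i=-1,0$: one genuinely has to split on whether $N_2$ is diagonalizable, on which Jordan component carries the minimal norm, and on the three possible values of $B(\nu,N_2)$, and in each branch choose the right basis vector whose value $Q(e_k)+2B(\nu,e_k)$ has $2$-order provably less than $\alpha$. The arithmetic is elementary — it is entirely a matter of comparing $2$-adic valuations of $Q(e_k)$, of scale entries, and of $B(\nu,e_k)$ — but keeping the bookkeeping honest so that no subcase is missed, and verifying anisotropy is never violated by the $\A$-component subcases, is where the care is needed. I would organize it to mirror the structure of the proof of Lemma \ref{F0} as closely as possible so the reader can reuse that intuition.
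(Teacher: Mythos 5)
Your proposal is correct and takes essentially the same route as the paper's proof: split on $\s(N_2)=\n(N_2)$ versus $2\s(N_2)=\n(N_2)$, normalize $2\nu=ae_1+be_2+ce_3$ with $a,b,c\in\{0,1\}$ via Remark \ref{CL3}, use the expansion of $Q(\nu)$ to pin down the parities of $a,b$, and contradict \eqref{R3} by evaluating $Q(e_1)+2B(\nu,e_1)$ at a leading Jordan basis vector (the values $2^{\beta+i+1}\eta(1+a)$ and $2^{\beta+i}(2+2a+b)$, whose $2$-orders are too small), exactly the computations you cite. The differences are purely organizational (the paper runs parts (1), (2) and the clause $\n(N)=\s(N)$ through one case analysis, with the $2\s=\n$ case ruled out entirely); just note that the norm ideal of a component $2^{j-1}\A$ is $2^{j}\Z_2$, which is what your displayed formulas already use.
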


\begin{proof}
We will suppose throughout that $N_p$ represents all of $\Z_p$ for every $p$ odd.  From Lemma \ref{L0}, this implies that $N_2$ is anisotropic.  Suppose that $\alpha=\beta+2$ or $\beta+3$, and $B(\nu,N_2)=2^{\beta+i}\Z_2$, where $i=-1,0,1$ by Lemma \ref{F0}.   Under assumption (\ref{R3}), we get that $\n(N)=2^{\beta+i+1}\Z$. 

Suppose that $\n(N)=\s(N)$.  Then, $N_2\cong\lan 2^{\beta+i+1}\eta\ran\perp K_2$ in a basis $\{e_1,e_2,e_3\}$, where $K_2$ is some binary lattice.  Letting $2\nu=ae_1+be_2+ce_3$, where $0\leq a,b,c\leq 1$ by Remark \ref{CL3}, we get $Q(e_1)+2B(\nu,e_1)=2^{\beta+i+1}\eta(1+a)$.  But since $a$ is either 0 or 1, we have that $\ord_2(2^{\beta+i+1}\eta(1+a))\leq \beta+i+2$.  Therefore, since (\ref{R3}) must hold, we conclude that when $\alpha=\beta+2$ then $i=0$ or $1$, and when $\alpha=\beta+3$ then $i=1$.  

Next, we will deal with the case where $\n(N)=2\s(N)$.  In this case, $N_2\cong2^{\beta+i}\A\perp \lan2^{j}\eta\ran$ in a basis $\{e_1,e_2,e_3\}$, where $\beta+i+2\leq j$ since $N_2$ is anisotropic.  Setting $2\nu=ae_1+be_2+ce_3$ with $0\leq a,b,c\leq 1$, we get that $Q(\nu)=2^{\beta+i-1}\left[(a^2+b^2+ab)+2^{j-(\beta+i+1)}\eta c^2\right]$.  Since $Q(\nu)\in 2^\beta\Z_2^\times$, this implies that when $i=-1,0$ then $a=b=0$, and when $i=1$, then at least one of $a$ or $b$ must be odd, without loss of generality we will suppose that $b=1$. But now $Q(e_1)+2B(\nu,e_1)=2^{\beta+i}(2+2a+b)$, and for any choice of $i$ we have $\ord_2(2^{\beta+i}(2+2a+b))\leq \beta+1$.  Therefore, by assumption (\ref{R3}), we conclude that $\alpha$ cannot be $\beta+2$ or $\beta+3$. 

\end{proof}

\begin{remark}\label{RL1.1.1}
When $N_p$ represents all of $\Z_p$ for every odd prime $p$, we notice that when $\alpha=\beta+2$ and $B(\nu,N_2)=2^\beta\Z$, then $N_2$ must be diagonalizable, or else $N_2$ cannot represent $Q(2\nu)\in 2^{\beta+2}\Z_2^\times$. 
\end{remark}

\begin{remark}\label{RL1.1.2}
When $\alpha=\beta+1$ then if $N_2$ is not diagonalizable, it can easily be shown that for any choice of basis $\{e_1,e_2,e_3\}$ for $N_2$, we get $B(\nu,e_n)\in 2^{\beta+1}\Z$ for $n=1,2,3$. Therefore, when $\alpha=\beta+1$ and $B(\nu,N_2)=2^{\beta-1}\Z_2$, then $N_2$ is diagonalizable. 
\end{remark}

\begin{lemma}\label{L6}
Suppose that $Q(\nu)\in \Z_2^\times$.  If $\alpha=1$ and $N_p$ represents every element in $\Z_p$ for every odd prime $p$, then $\gen(M)$ primitively represents every unit in $\Z_2^\times$ if one of the following holds:
\begin{enumerate}[(1)]
\item $2\s(N)=\n(N)=4\Z$; or,
\item $ord_2(dN)=3$; or,
\item $ord_2(dN)=5$ and $B(\nu,N_2)\neq\Z_2$. 
\end{enumerate}
Furthermore, if (1), (2), and (3) all fail, then $H(x)$ is not almost universal. 
\end{lemma}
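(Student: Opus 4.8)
The plan is to reduce the entire statement to the prime $2$. Since $Q(\nu)=\epsilon\in\Z_2^\times$ and $\nu$ is primitive in $M_2$ (if $\nu\in 2M_2$ then $(1-2a)\nu\in N_2$ for some $a\in\Z_2$, forcing $\nu\in N_2$, a contradiction), the rank-one lattice $\Z_2\nu$ is unimodular and splits off, giving $M_2\cong\langle\epsilon\rangle\perp G$ with $G$ binary. For odd $p$ we have $M_p=N_p\cong\langle1,-1,-dM\rangle$ by Lemma~\ref{L0}(1); this contains a unimodular hyperbolic plane, which primitively represents every $p$-adic unit, so the odd-prime conditions for primitive representation by $\gen(M)$ are automatic. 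As primitive representation by a genus is a local property, the first assertion reduces to showing that $M_2$ primitively represents every $2$-adic unit. Here it suffices to hit each square class mod $8$ by a primitive vector, since scaling a primitive vector by a unit is an isometry that preserves both primitivity and the $\Z_2^\times$-square class, and square classes are exactly congruence classes mod $8$.

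For the positive direction I would compute the Jordan decomposition of $M_2$ (equivalently of $G$) in each of (1)--(3) from the standing data $\beta=0$, $\alpha=1$, $\s(N_2),\n(N_2)\subseteq 2\Z_2$, the anisotropy of $N_2$, the bounds $B(\nu,N_2)\in\{\Z_2,2\Z_2\}$ (Lemma~\ref{F0} and \eqref{R2}), and $\ord_2 dM=\ord_2 dN-2$. Two mechanisms yield primitive universality on units: a binary unimodular component $\langle\epsilon,\epsilon'\rangle$ with $\epsilon\epsilon'\equiv3\pmod4$ already represents all four unit classes primitively; and when the unimodular part has $\epsilon\epsilon'\equiv1\pmod4$, a Jordan component of scale $2\Z_2$ (and, under (3), also of scale $4\Z_2$) shifts $\langle\epsilon\rangle$ through the remaining residues mod $8$. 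Condition (1) forces the even-type $2\mathbb{A}$ in $N_2$ to open into a unimodular plane of determinant $\equiv3\pmod4$ in $M_2$; condition (2) gives $\ord_2 dM=1$, hence a scale-$2\Z_2$ component; condition (3), via $B(\nu,N_2)=2\Z_2$, yields $M_2\cong\langle\epsilon,2u,4v\rangle$, whose low even-scale parts reach every residue mod $8$. In each case I would finish by writing down explicit primitive vectors realizing the four classes.

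For the \emph{furthermore}, if (1)--(3) all fail I would enumerate the remaining Jordan forms of $M_2$; the anisotropy of $N_2$ (which pins the Hasse invariant against $dM$) together with $\alpha=1$ and the scale/norm bounds restricts these to configurations in which the unimodular part has $\epsilon\epsilon'\equiv1\pmod4$ and no usable low even-scale component is present, so that $M_2$ fails to primitively represent some fixed unit class $c\pmod8$. I then globalize: $H(x)=a$ is solvable if and only if $Q(\nu)+2a\in Q(\nu+N)$, i.e. some $w\in\nu+N$ satisfies $Q(w)=Q(\nu)+2a$. Because $2M_2\subseteq N_2$ by \eqref{R1} while $w\notin N_2$, every such $w$ is $2$-primitive, so $Q(w)=t$ would be a $2$-primitive representation of $t$ by $M_2$. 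Since $Q(\nu)$ is odd, $t$ ranges over all large odd integers; for the infinitely many $t\equiv c\pmod8$ no such $w$ can exist, whence infinitely many $a$ are unrepresented and $H$ is not almost universal.

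The main obstacle is the Jordan-form bookkeeping underlying the middle two paragraphs: one must prove that (1)--(3) single out exactly the $2$-adic configurations for which $M_2$ is primitively universal on units. This hinges on using the anisotropy of $N_2$ to pin down the unit part of $dM$ against $\ord_2 dM$, and on tracking precisely how adjoining $\nu$ transforms the Jordan type of $N_2$ into that of $M_2$ (in particular how an even-type component can open into a diagonal unimodular plane). Identifying the unique missed class $c$ in every failing configuration, and verifying that no higher Jordan component can rescue it, is the delicate step; the reduction to the prime $2$ and the final globalization are routine once the splitting $M_2\cong\langle\epsilon\rangle\perp G$ is in hand.
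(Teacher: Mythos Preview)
Your proposal is correct and follows essentially the same route as the paper: both reduce to the prime $2$, split off $\langle\epsilon\rangle$ from $M_2$ (the paper does this case-by-case via explicit bases for $M_2$ rather than stating the splitting uniformly, but the computations are the same), verify in each of (1)--(3) that the resulting form primitively represents every $2$-adic unit, and for the ``furthermore'' enumerate the remaining Jordan types of $N_2$ to exhibit a missed unit class. The only cosmetic difference is that the paper occasionally phrases the failure directly in terms of $H$ (e.g., showing $H$ takes only even values in one subcase) rather than via a missed residue of $M_2$, but the content is identical.
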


\begin{proof}
Suppose that $N_p$ represents every $p$-adic integer for every odd prime $p$, then  $N_p\cong\lan1,-1,-dN\ran$ by Lemma \ref{L0}, and so every unit in $\Z_2^\times$ is represented primitively by $N_p$ at every odd prime $p$.  

If (1) holds, then we may assume that $N_2\cong 2\mathbb{A}\perp 2^i\lan\eta\ran$ in a basis $\{e_1,e_2,e_3\}$ where $i>1$.  Assuming that $\nu=\frac{ae_1+be_2+ce_3}{2}$ with $0\leq a,b,c\leq 1$, since $\epsilon\in \Z_2^\times$, we may conclude that at least one of $a,b$ is odd, without loss of generality we will assume that $a=1$.  But now $M_2\cong\Z_2[\nu,e_2,e_3]$, which contains the binary sublattice $\Z_2[\nu,e_2]$.  But $B(\nu,e_2)=1+2b\in \Z_2^\times$, and therefore $\Z_2[\nu,e_2]\cong\lan\epsilon, \epsilon (-1+4\epsilon)\ran$, which primitively represents all units in $\Z_2^\times$. 

If part (2) holds, then $N_2\cong\lan2\eta,2\gamma,2\mu\ran$ in a basis $\{e_1,e_2,e_3\}$.  Let $\nu=\frac{ae_1+be_2+ce_3}{2}$ where $0\leq a,b,c\leq 1$.  Then, since $\epsilon\in\Z_2^\times$, we may assume without loss of generality that $a=b=1$ and $c=0$.   But now $M_2\cong \Z_2[\nu,e_2,e_3]\cong\lan \epsilon, \epsilon(-1+2\epsilon\mu), 2\mu\ran$, which primitively represents all units in $\Z_2^\times$.  

If part (3) holds, then $\n(N)=2\Z$.  If $N_2$ is not diagonalizable, then $N_2\cong\lan2\eta\ran\perp 4\A$ or $\lan2^5\eta\ran\perp \A$, both of which are isotropic.  Therefore, we assume $N_2\cong\lan 2\eta,2^i\gamma,2^j\mu\ran$ in a basis $\{e_1,e_2,e_3\}$ where $i\leq j$.  We let $\nu=\frac{ae_1+be_2+ce_3}{2}$ where $0\leq a,b,c\leq 1$, and since $B(\nu,N_2)=2\Z_2$, it is immediate that $a=0$. Therefore, since $\epsilon\in \Z_2^\times$ and $\ord_2(dN)=5$, it follows that $i=j=2$ and exactly one of $b,c$ is odd, so without loss of generality we will suppose that $b=1$ and $c=0$.  Then, $M_2\cong\Z_2[\nu,e_1,e_3]$ which is isometric to $\lan\epsilon, 2\eta, 4\mu\ran$, which primitively represents all units in $\Z_2^\times$. 

Now we will assume that parts (1), (2) and (3) all fail, and we will show that $H(x)$ is not almost universal.  Since $\s(N)\subseteq 2\Z$ and since $N$ must represent $4\epsilon$, if $2\s(N)=\n(N)$ then the only choice is for $\n(N)=4\Z$.  If (1), (2), and (3) all fail, then either $N_2$ is diagonalizable and then $\ord_2(dN)>5$, or $\ord_2(dN)=5$ and $B(\nu,N_2)=\Z_2$; or, $N_2$ is not diagonalizable, and then $\s(N_2)=\n(N_2)=2\Z$ or $4\Z$. 

First we will suppose that $\ord_2(dN)=5$, $B(\nu,N_2)=\Z$, and $N_2$ is diagonalizable.  Under these assumptions, the only possibility is that $N_2\cong\lan2\eta, 2\gamma,8\mu\ran$, or else $B(\nu,N_2)=2\Z_2$.  Then we may assume that $\nu=\frac{e_1+e_2+ce_3}{2}$ where $0\leq c\leq 1$, and from here it is not difficult to see that $H(x)$ only represents even integers, and hence is not almost universal. 

If $N_2$ is diagonalizable, with $\ord_2(dN)>5$ and $N_2$ is of the form $\lan2\eta, 2\gamma,2^j\mu\ran$ with $j\geq 4$, then the argument follows as in the previous paragraph.  Therefore, we may assume that $N_2\cong \lan2\eta, 2^i\gamma,2^j\mu\ran$ in a basis $\{e_1,e_2,e_3\}$, where $3\leq i\leq j$.  Since $\epsilon\in \Z_2^\times$, it follows that $\nu=\frac{be_2+ce_3}{2}$ where $0\leq b,c\leq 1$, and thus $\{\nu,e_2,e_1\}$ is a basis for $M_2$.  Therefore, $M_2\cong\lan\epsilon,\epsilon(2^i\gamma\epsilon-(2^{i-1}\gamma)^2),2\eta\ran$, which clearly only represents units in the square classes of $\epsilon$ and $\epsilon+2\eta$.  Therefore, $H(x)$ cannot be almost universal. 

Now suppose that $N_2$ is not diagonalizable.  Then from the failure of (1), (2), and (3), we may conclude that $N_2\cong\lan2^i\eta\ran\perp 2^j\mathbb{A}$, where $1\leq i\leq j$ and $i$ and $j$ have the same parity by Lemma \ref{L0}.  If $\n(N)=2\Z$, then $i=1$, in which case $j\geq 3$ is odd and $N_2$ fails to represent $\epsilon$. Therefore we will suppose that $\n(N_2)=4\Z$.  Then, $2B(\nu,N_2)\subseteq B(2\nu,N_2)\subseteq 4\Z$ since $i\leq j$, and therefore $H(x)$ only represents even integers.  
\end{proof}

\begin{lemma}\label{L7}
Suppose that $Q(\nu)\in \Z_2^\times$.  If $\alpha=2,3$ and $N_p$ represents every element in $\Z_p$, then $\gen(M)$ primitively represents $\epsilon+2^\alpha n$ for all positive $n\in \Z$ if and only if one of the following holds: 
\begin{enumerate}[(1)]
\item $\alpha=2$, and $B(\nu,N_2)=\Z_2$; or,
\item $\alpha=2$, $B(\nu,N_2)\neq\Z_2$ and $\n(G_2)=4\Z_2$, where $G$ is the orthogonal complement of $\nu$ in $N$; or,  
\item $\alpha=3$ and $B(\nu,N_2)\neq\Z_2$.
\end{enumerate}
\end{lemma}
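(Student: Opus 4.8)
The plan is to work entirely over $\Z_2$, reducing the question to whether $M_2$ (equivalently a suitable binary sublattice of it) primitively represents all units of the form $\epsilon + 2^\alpha n$. Since $Q(\nu)\in\Z_2^\times$ we have $\beta=0$, and the hypothesis $\alpha\in\{2,3\}$ together with Lemma~\ref{L0} forces $0<\alpha\le 3$, so the cases $\alpha=2,3$ are exactly the nontrivial ones. By Lemma~\ref{L0}(1) the odd-prime hypothesis already gives $N_p\cong\lan1,-1,-dN\ran$ and $\theta(O^+(N_p))\supseteq\Z_p^\times$ for odd $p$, so every unit is primitively represented by $N_p$ at each odd $p$; the whole content is at $p=2$, where $\n(N_2)\subseteq 2\Z_2$ (since $\n(\nu,N)=2^\alpha\Z$ with $\alpha>0$) and, by the remark preceding Lemma~\ref{F0}, $\s(N_2)\subseteq 2\Z_2$.

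First I would record the possible shapes of $N_2$. By Lemma~\ref{F0} (with $\beta=0$) we have $2\Z_2\subseteq B(\nu,N_2)\subseteq \frac12\Z_2$, i.e. $B(\nu,N_2)=2^{i}\Z_2$ with $i\in\{-1,0,1\}$, and assumption~\eqref{R3} gives $\n(N_2)=2^{i+1}\Z_2$ when $\alpha\ne i+1$. Normalizing $\nu=\frac{ae_1+be_2+ce_3}{2}$ with $0\le a,b,c\le1$ via Remark~\ref{CL3}, I would enumerate the Jordan splittings of $N_2$ compatible with $Q(\nu)\in\Z_2^\times$, $\s(N_2)\subseteq2\Z_2$, anisotropy, and the prescribed value of $B(\nu,N_2)$; this is the same bookkeeping already carried out in Lemmas~\ref{L6} and~\ref{L1.1}, and I would lean on those. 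The upshot I expect: in case (1), $\alpha=2$ and $B(\nu,e_j)\in\Z_2^\times$ for some $j$, which lets me extract a binary unimodular sublattice $\Z_2[\nu,e_j]$ of $M_2$ of the form $\lan\epsilon,\epsilon\delta\ran$ with $\delta\in\Z_2^\times$; such a binary form already represents all of $\Z_2^\times$, hence in particular all $\epsilon+4n$. In case (3), $\alpha=3$ and $B(\nu,N_2)\subseteq\Z_2$ with $\n(N_2)=2\Z_2$ or $4\Z_2$, and one produces a sublattice of $M_2$ isometric to $\lan\epsilon,2\gamma,?\ran$-type whose spinor norm group is large enough (using the formulas of \cite{EH75,EH94} as corrected in \cite{CO09}) to sweep out every class $\epsilon+8n$. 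In case (2), $\alpha=2$, $B(\nu,N_2)=2\Z_2$, and the extra hypothesis $\n(G_2)=4\Z_2$ on the orthogonal complement $G_2=\nu^\perp$ in $N_2$ is exactly what forces $M_2\cong\lan\epsilon\ran\perp G_2$ with $G_2$ of norm $4$, so that $M_2$ represents $\epsilon+4n$ for every $n$.

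For the ``only if'' direction I would contrapose: assuming $\alpha=2$ and none of (1)-(2) holds — so $B(\nu,N_2)=2\Z_2$ and $\n(G_2)=8\Z_2$ (it cannot be $2\Z_2$ by the norm computation, and $\s(G_2)\subseteq2\Z_2$) — then $M_2\cong\lan\epsilon\ran\perp G_2$ with $\n(G_2)\subseteq8\Z_2$, and I would show directly that $M_2$ misses the square class $\epsilon+4\eta$ for a suitable $\eta\in\Z_2^\times$, exactly as in the final paragraphs of the proof of Lemma~\ref{L6}. For $\alpha=3$, if $B(\nu,N_2)=\Z_2$ is forced to fail, the only remaining possibility compatible with \eqref{R3} has $B(\nu,N_2)=\frac12\Z_2$, but then (as in Remark~\ref{RL1.1.2}, adapted) $\n(N)=\frac12\Z_2\not\subseteq\Z_2$ violates \eqref{R2}; so in fact $\alpha=3$ already implies $B(\nu,N_2)\ne\Z_2$, making (3) automatic — I would double-check this against Lemma~\ref{F0} and state it cleanly, possibly simplifying the hypothesis. (If instead $\alpha=3$ genuinely allows $B(\nu,N_2)=\Z_2$, then $\n(N_2)=2\Z_2$ and $\alpha=3\ne\beta+1$ is consistent; in that subcase I would exhibit a binary unimodular sublattice of $M_2$ and show it represents only $\epsilon$ and $\epsilon+2\eta$ mod squares, hence misses $\epsilon+8n$ for some $n$ — the mechanism is again that of Lemma~\ref{L6}.)

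The main obstacle is the case-by-case determination of the Jordan decomposition of $N_2$ and, from it, of $M_2$ and its relevant sublattices — in particular pinning down the norm and scale of the orthogonal complement $G_2$ and, when the sublattice extracted is not unimodular, computing its primitive spinor norm group finely enough (via \cite{EH75,EH78,EH94,H75} with the correction in \cite{CO09}) to decide exactly which classes $\epsilon+2^\alpha n$ are hit. The non-diagonalizable subcases for $N_2$, where $M_2$ can fail to split off $\lan\epsilon\ran$ cleanly, will require the most care; but anisotropy of $\Q_2N$ (Lemma~\ref{L0}(2)) severely restricts these, and I expect each to collapse quickly. Once the sublattice structure is in hand, the representability statements are standard local computations, so I would keep those terse and cite the analogous passages in Lemmas~\ref{L6} and~\ref{L7}'s siblings.
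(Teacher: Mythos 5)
Your overall frame (everything happens at $p=2$; odd primes are handled by Lemma \ref{L0}; cases (1), (2) via a suitable sublattice of $M_2$; converse by contraposition) matches the paper, and your sketches for case (1), case (2) and the $\alpha=2$ converse are essentially the paper's arguments. But case (3) as you describe it would not work. The statement is about $\gen(M)$, i.e.\ about \emph{local} primitive representation, and spinor norm groups computed from \cite{EH75}, \cite{EH94} have no bearing on which elements $M_2$ represents -- they distinguish spinor genera, which is irrelevant at the genus level; so ``the spinor norm group is large enough to sweep out every class $\epsilon+8n$'' is not a mechanism that can close this case. Moreover the sublattice you propose cannot exist: when $\alpha=3$, Lemma \ref{L1.1}(1) forces $B(\nu,N_2)=2\Z_2$, hence by \eqref{R3} $\n(N_2)=4\Z_2$ (your alternative $\n(N_2)=2\Z_2$ is incompatible with $\n(\nu,N)=8\Z$), and then every vector $a\nu+x\in M_2$ has $Q(a\nu+x)\equiv a^2\epsilon \bmod 8$ if $a$ is a unit and $Q(a\nu+x)\in 4\Z_2$ otherwise, so $M_2$ has no vector of norm $2\gamma$ and no sublattice $\lan\epsilon,2\gamma,\ast\ran$. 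The paper's argument for (2) and (3) is uniform and much simpler: $B(\nu,N_2)=2\Z_2$ gives $\n(N)=4\Z$, so $\Z[2\nu]\cong\lan 4\epsilon\ran$ splits $N_2$ and $M_2\cong\lan\epsilon\ran\perp G_2$; then the Local Square Theorem \cite[63:1]{OM} shows $\epsilon+2^\alpha n$ is primitively represented (for $\alpha=3$ already by the rank-one piece, since $\epsilon+8n=\epsilon t^2$ with $t\in\Z_2^\times$; for $\alpha=2$ with $n$ odd one needs the norm-$4$ vector of $G_2$, which is exactly where hypothesis (2) enters -- not, as you say, to obtain the splitting itself).

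Two smaller points. In case (1) your binary sublattice $\Z_2[\nu,e_j]$ has Gram matrix with diagonal $\epsilon,\,2\eta$ and unit off-diagonal, so its discriminant is $\equiv 1\bmod 4$ and it represents only the units $\equiv\epsilon\bmod 4$, \emph{not} all of $\Z_2^\times$ as you claim; fortunately the units $\equiv\epsilon\bmod 4$ are exactly the $\epsilon+4n$, which is all you need and is what the paper asserts. And in the $\alpha=3$ converse your discussion of $B(\nu,N_2)=\tfrac12\Z_2$ is beside the point (that case is already excluded by \eqref{R2}); the case to rule out is $B(\nu,N_2)=\Z_2$, and the correct citation is Lemma \ref{L1.1}(1) (not Lemma \ref{F0}), which makes condition (3) automatic, as the paper observes. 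With case (3) repaired along the lines above and the case-(1) claim weakened to ``units $\equiv\epsilon\bmod 4$'', your outline becomes the paper's proof.
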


\begin{proof}
Suppose that $N_p$ represents all of $\Z_p$ for every odd prime $p$.  Then, for every odd prime $p$, $N_p\cong\lan1,-1,-dN\ran$ by Lemma \ref{L0}, and so $\epsilon+2^\alpha n$ is represented primitively by $N_p$ at every odd prime $p$.  

Suppose that $\alpha=2$ and $B(\nu,N_2)=\Z_2$.  Then $\n(N)=2\Z$, and by Remark \ref{RL1.1.1} we may assume that $N_2\cong\lan2\eta,2^i\gamma,2^j\mu\ran$ in a basis $\{e_1,e_2,e_3\}$, where $1\leq i\leq j$.  By Remark \ref{CL3} we may let $\nu=\frac{ae_1+be_2+ce_3}{2}$, with $0\leq a,b,c\leq 1$.   Since $\epsilon\in \Z_2^\times$, either $a=1$, in which case $b=1$ and $i=1$, or $a=0$, in which case $b=c=1$ and $i=j=1$.  But in either case, $M_2$ contains the binary sublattice $\Z_2[\nu,e_2]$ which clearly represents all units congruent to $\epsilon\mod 4$.  Therefore, $M_2$ primitively represents all units of the form $\epsilon+4n$; thus $\gen(M)$ primitively represents $\epsilon+4n$ for all $n\geq 1$.   

For parts (2) and (3) we observe that when $B(\nu,N_2)=2\Z_2$, then whether $\alpha=2$ or $3$, it must be the case that $\n(N)=4\Z$.  So in either case $\Z[2\nu]\cong\lan4\epsilon\ran$ splits $N_2$ as an orthogonal summand, and therefore,  $M_2\cong\lan\epsilon\ran\perp G_2$.  In either case it is a consequence of the Local Square Theorem \cite[63:1]{OM} that $\epsilon+2^\alpha n$ is represented primitively by $M_2$. Therefore, $\epsilon+2^\alpha n$ is represented primitively by $\gen(M)$ for every $n\geq 1$. 

Suppose that parts (1)-(3) all fail.  If $\alpha=2$, then it follows from that failure of (1) and (2) that $B(\nu,N_2)=2\Z_2$ and $\n(G_2)=8\Z_2$. However, in this case it is clear that $M_2$ will only represent units in the square class of $\epsilon$.  If $\alpha=3$, then from the failure of (3) it follows that $B(\nu,N_2)=\Z_2$.  However, this case cannot occur by Lemma \ref{L1.1}.  

\end{proof}

For the following lemma, we define an integer $\delta_N$ by 
\[
\delta_N:=\begin{cases}
1 & \text{ if }\ord_2(dN)\text{ is even,}\\
2 & \text{ if }\ord_2(dN)\text{ is odd.}
\end{cases}
\]
If $N_p$ represents all of $\Z_p$ for every odd prime $p$, then for any odd primitive spinor exception $t$, $E=\Q(\sqrt{-\delta_N})$, were $E$ is as defined in Lemma \ref{L0}.  

\begin{lemma}\label{L9}
Suppose that $Q(\nu)\in \Z_2^\times$.  Suppose that $N_p$ represents all of $\Z_p$ for every odd prime $p$.  If $\rad(dN)$ is divisible by an odd prime $p$ satisfying $\left(\frac{-\delta_N}{p}\right)=-1$, then $\gen(M)$ has no odd primitive spinor exceptions. 
\end{lemma}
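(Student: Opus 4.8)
The plan is to use the theory of primitive spinor exceptions for ternary quadratic forms, exactly as developed in the Earnest--Hsia--Hung and Schulze-Pillot work referenced in the preliminaries. Recall that an integer $t$ represented by $\gen(M)$ but not by $\cls(M)$ (i.e.\ a primitive spinor exception) must, by Lemma \ref{L0}(3) together with the hypothesis that $N_p$ represents all of $\Z_p$ for odd $p$, have $E = \Q(\sqrt{-t\,dM}) = \Q(\sqrt{-\delta_N})$. The standard criterion says that $t$ is a primitive spinor exception of $\gen(M)$ if and only if $E$ is a genuine quadratic field and, for every prime $q$, the relative spinor norm group $\theta^*(M_q,t)$ is contained in the group $\N_q(E)$ of local norms from $E$. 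Thus, to show there are \emph{no} odd primitive spinor exceptions, it suffices to exhibit, for the given odd prime $p \mid \rad(dN)$ with $\left(\frac{-\delta_N}{p}\right)=-1$, the containment $\theta^*(M_p,t) \not\subseteq \N_p(E)$ for every candidate odd $t$.

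First I would fix such a prime $p$ and observe that, since $p$ is odd and $N_p \cong \lan 1,-1,-dN\ran$ by Lemma \ref{L0}(1), and $M_p = N_p$ for odd $p$ (because $2\nu\in N$), the lattice $M_p$ is unimodular in its first two components and has a third component of scale $p^{\ord_p(dM)}$. Since $p \mid \rad(dN)$, the exponent $\ord_p(dM)$ is odd, so $M_p$ is non-split-by-a-unimodular-plane in the relevant sense and its relative spinor norm group can be read off from the Earnest--Hsia tables: for a ternary $\Z_p$-lattice of this shape primitively representing a unit $t$, one has $\theta^*(M_p,t) = \Z_p^\times \Q_p^{\times 2}$ (all units, modulo squares), because the Jordan splitting has a unimodular binary part of the right form. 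On the other hand, $\N_p(E) = \N_p(\Q(\sqrt{-\delta_N}))$: since $p$ is odd and $\left(\frac{-\delta_N}{p}\right) = -1$, the extension $E/\Q$ is ramified-or-inert at... no: with $p \nmid \delta_N$ (as $\delta_N \in \{1,2\}$), $p$ is \emph{inert} in $E$, so $\N_p(E)$ consists precisely of those elements of $\Q_p^\times$ with even valuation, i.e.\ $\N_p(E) \cap \Z_p^\times = (\Z_p^\times)^2$ up to the norm subgroup — in particular $\N_p(E) \not\supseteq \Z_p^\times$. Comparing, $\theta^*(M_p,t) = \Z_p^\times\Q_p^{\times 2} \not\subseteq \N_p(E)$, which already violates one of the necessary local conditions, so no odd $t$ can be a primitive spinor exception.

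The main obstacle, and the step requiring genuine care rather than bookkeeping, is verifying that $\theta^*(M_p,t)$ really does contain all units for \emph{every} admissible odd $t$ and \emph{every} lattice $M_p$ of the allowed shape — that is, handling the case distinction on $\ord_p(dM)$ (it is odd, but could be $1$, $3$, $5$, \dots) and on whether $t$ is a square or a nonsquare unit mod $p$, using the correct formula from \cite{EH75, EH78, H75} (with the correction noted in \cite{CO09}). One must be sure that the odd-valuation Jordan component does not conspire to shrink $\theta^*(M_p,t)$ down into $(\Z_p^\times)^2$; the fact that the unimodular part is $\lan 1,-1\ran$, which is isotropic over $\Z_p$ for odd $p$, is what guarantees the spinor norm group is as large as possible. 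I would conclude by noting this is precisely the odd-prime analogue of \cite[Lemma 2.4 or similar]{CO09}, and the argument is essentially the one given there, adapted to the slightly more general $\delta_N$ coming from the parity of $\ord_2(dN)$; the even prime $2$ plays no role here since we only rule out \emph{odd} primitive spinor exceptions.
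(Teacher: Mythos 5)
Your overall framework is the right one (a primitive spinor exception $t$ forces $\theta^*(M_q,t)\subseteq \N_q(E)$ at every prime $q$ with $E=\Q(\sqrt{-\delta_N})$, so it suffices to break the containment at the single prime $p$), but the local computation at $p$ is wrong in the decisive step. Since $p$ is odd and $\left(\frac{-\delta_N}{p}\right)=-1$ with $\delta_N\in\{1,2\}$, the extension $E_{\mathfrak p}/\Q_p$ is \emph{unramified}, and for an unramified quadratic extension of local fields every unit is a norm: $\N_p(E)$ is exactly the set of elements of even valuation, so $\N_p(E)\supseteq \Z_p^\times$. Your parenthetical ``$\N_p(E)\cap\Z_p^\times=(\Z_p^\times)^2$, in particular $\N_p(E)\not\supseteq\Z_p^\times$'' describes the ramified case and contradicts your own (correct) statement one line earlier that $\N_p(E)$ consists of the even-valuation elements. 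Consequently the comparison you draw, $\theta^*(M_p,t)=\Z_p^\times\Q_p^{\times 2}\not\subseteq\N_p(E)$, is false: the group $\Z_p^\times\Q_p^{\times 2}$ is \emph{contained} in $\N_p(E)$, so if the relative spinor norm group really were just the units, no contradiction would arise at all and the argument collapses.

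The actual obstruction runs in the opposite direction and is where the hypothesis $p\mid\rad(dN)$ enters. Because $\ord_p(dN)=\ord_p(dM)$ is odd, the Jordan splitting $M_p\cong\lan 1,-1\ran\perp\lan -dM\ran$ has a rank-one component of odd scale; the symmetry in the vector spanning it lies in $O(M_p)$, so $\theta(O^+(M_p))$ (and hence the larger group $\theta^*(M_p,t)$) contains $-dM$ modulo squares, an element of \emph{odd} valuation. Odd-valuation elements are precisely what $\N_p(E)$ excludes when $E_{\mathfrak p}/\Q_p$ is unramified, so the required containment $\theta(O^+(M_p))\subseteq\N_p(E)$ fails and no odd $t$ can be a primitive spinor exception. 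This is exactly how the paper argues (phrased contrapositively via \cite[Theorem 1]{EH94}: the containment at an inert prime forces $\ord_p(dN)$ to be even, i.e.\ $p\nmid\rad(dN)$). So the repair is not more bookkeeping over the cases you list at the end; it is replacing the claim ``units are not norms'' by ``the spinor norm group contains an odd-valuation element because the discriminant has odd order at $p$.''
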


\begin{proof}
Suppose that $\gen(M)$ has an odd primitive spinor exception, and hence $E=\Q(\sqrt{-\delta_N})$.  For any odd prime $p$ satisfying $\left(\frac{-\delta_N}{p}\right)=-1$, $E_p/\Q_p$ is an unramified quadratic extension and hence we are in the setting of \cite[Theorem 1]{EH94}. Since for any such $p$ we have $\theta(O^+(M_p))\subseteq \mathfrak{N}_p(E)$, it must follow that $\ord_p(dN)$ is even, and consequently $p\nmid \rad(dN)$.   Therefore the contrapositive must hold, namely, if $\rad(dN)$ is divisible by a prime $p$ satisfying $\left(\frac{-\delta_N}{p}\right)=-1$, then $\gen(M)$ cannot have any odd primitive spinor exceptions. 
\end{proof}

If we have a lattice $K\cong\lan1,2^i\gamma,2^j\mu\ran$ with $0<i< j$ and $\gamma,\mu\in \Z_2^\times$, and $K$ is not of {\em type $E$} as defined in \cite{EH78}, we will compute the spinor norm for $K$ as follows.  Defining sublattices $U$ and $W$ as  
\[
U\cong\lan1,2^i\gamma\ran\text{ and }W\cong2^i\gamma\lan1,2^{j-i}\gamma\mu\ran,
\]
then by \cite[Theorem 2.7]{EH75}, $\theta(O^+(K))=Q(P(U))Q(P(W)){\Q_2^\times}^2$, where $P(U)$ (resp. $W$) is the set of primitive anisotropic vectors in U (resp. $W$) whose associated symmetries are in $O(U)$ (resp. $O(W)$). Then, $Q(P(U))=\theta(O^+(U)){\Q_2^\times}^2$ and $Q(P(W))=2^i\gamma\theta(O^+(\lan1,2^{j-i}\gamma\mu\ran)){\Q_2^\times}^2$ can be computed using \cite[1.9]{EH75}.  Since scaling does not affect the spinor norm, it will often be easier to compute the spinor norm of $M_2$ after scaling by $\epsilon$; to that end, we define $L:=M^\epsilon$ to be the lattice $M$ scaled by $\epsilon$, which simply means that $L$ and $M$ have the same basis, but $L$ is endowed with the quadratic map $Q^\epsilon(x)=\epsilon Q(x)$, where $Q$ is the quadratic map on $M$. 
 
\section{Proof of Main Theorem}\label{S4}

For the proof of the main theorem, it will be necessary to establish the following additional notation.  We define $\bar{Q}:=\frac{1}{2^\beta}Q$, and therefore $\bar{Q}(\nu)\in \Z_2^\times$.  Furthermore, for any $x\in N$, we have $\bar{B}(\nu,x)=\frac{1}{2^\beta}B(\nu,x)$ and therefore $\bar{B}(\nu,\bar{N})=\frac{1}{2^\beta}B(\nu,N)$, where $\bar{N}$ denotes the $\Z$-lattice with quadratic map $\bar{Q}$.  Thus, if we have $H(x)=\frac{Q(x)+2B(\nu,x)}{2^\alpha}$, where $\n(\nu,N)=2^\alpha\Z$ in the usual way, then 
\[
H(x)=\frac{2^\beta\bar{Q}(x)+2^{\beta+1}\bar{B}(\nu,x)}{2^\alpha}=\frac{\bar{Q}(x)+2\bar{B}(\nu,x)}{2^{\alpha-\beta}},
\]  
where $\alpha-\beta=1,2,3$. 

Since $\frac{1}{2^\beta}\in \Z_p$ for $p$ odd, therefore $N_p$ represents all of $\Z_p$ if and only if $\bar{N}_p$ represents all of $\Z_p$, when $p$ is odd. Since $dN=2^{3\beta}d\bar{N}$, it follows that $\ord_2(dN)-3\beta=\ord_2(d\bar{N})$.  Since $\rad(dN)'$ refers only to the odd part of $dN$, scaling by $2^\beta$ will not change this value, and we can use $\rad(dN)'$ and $\rad(d\bar{N})'$ interchangeably. We note that the integer  $\lambda$ defined in section 1 is equivalent to $\delta_{\bar{N}}$.   

Under this construction, we also have $\bar{M}:=\frac{1}{2^\beta}M$, which can also be obtained by the usual method, setting $\bar{M}=\Z\nu+\bar{N}$.  If we suppose that $N_p$ represents all $\Z_p$ whenever $p$ is odd, then $\bar{M}$ satisfies the hypotheses of Lemmas \ref{L0}, \ref{L6} and \ref{L7}.  Unless otherwise noted, we let $E$ denote $\Q(\sqrt{-td\overline{M}})$, where $t$ is some odd primitive spinor exception of $\gen(\bar{M})$.  

\begin{pot}
We will suppose throughout that $N_p$ represents all $\Z_p$ whenever $p$ is odd.  Suppose that $\alpha=\beta+1$.  First we will consider the case when $B(\nu,N_2)=2^{\beta-1}\Z_2$, and thus $N_2$ is diagonalizable by Remark \ref{RL1.1.2}.  Since $\n(\nu,N)=2^\alpha \Z$, we have $\s(N)=\n(N)=2^\beta\Z$.  Thus, $N_2\cong\lan 2^\beta\eta,2^i\gamma,2^j\mu\ran$ in a basis $\{e_1,e_2,e_3\}$, where $\beta\leq i\leq j$.  Letting $\nu=\frac{ae_1+be_2+ce_3}{2}$ with $0\leq a,b,c\leq 1$, it follows that for any choice of $a$ we must have $b=1$.   Therefore, $\{\nu,e_1,e_3\}$ is a basis for $M$.  Consider the sublattice $R:=\Z[\nu,2e_1,2e_3]$ of $M$.  Then, $R_p=M_p$ for every odd prime $p$, and therefore $\Z_p^\times\subseteq \theta(O^+(R_p))$ for every odd $p$, by Lemma \ref{L0}.  Furthermore, $\n(R_2\cap N_2)=\n(\Z_2[2\nu,2e_1,2e_3])=2^{\beta+2}\Z$, and therefore any representation of $2^\beta\epsilon+2^\alpha n$ by $R$ must come from the coset $\nu+N$.  

Thus we have 
\[
R_2^\frac{1}{2^{\beta}}\cong\begin{bmatrix}
\epsilon & \eta a & 2^{j-\beta}\mu c\\
 \eta a & 4\eta & 0\\
 2^{j-\beta}\mu c & 0 & 2^{j+2-\beta}\mu. 
\end{bmatrix}
\]
When $a=1$, then $R_2^\frac{1}{2^\beta}$ contains a sublattice isometric to  $\lan\epsilon, \epsilon(4\eta\epsilon-1)\ran$, which clearly represents all units in $\Z_2^\times$.  On the other hand, when $a=0$, then from the shape of $\nu$ we know that $j=\beta$ and $c=1$, and therefore $R_2$ again contains the sublattice $\lan\epsilon,\epsilon(4\mu\epsilon-1)\ran$, which again represents all units in $\Z_2^\times$.  Therefore, we get that $\Z_2^\times\in \theta(O^+(R_2))$, and hence $\Z_p^\times\subseteq \theta(O^+(R_p))$ for every prime $p$ implying that $\gen(R)$ has only one spinor genus \cite[102:9]{OM}.  Furthermore, $2^\beta\epsilon+2^\alpha n$ is represented primitively by $\gen(R)$ for every choice of $n$.  Therefore $H(x)$ is almost universal, by \cite[Corollary]{DS-P90}.

Next, we suppose that $B(\nu,N_2)\subseteq 2^\beta$, and hence $\n(N)\subseteq 2^{\beta+1}\Z$.  Therefore, any representation of $2^\beta\epsilon+2^\alpha n$ by $M$ is guaranteed to come from the coset $\nu+N$.  Furthermore, $\bar{Q}(\nu)\in \Z_2^\times$ and $2^\beta\n(\bar{N})=\n(N)$.  If part 1(b)(i) holds, then $2\s(N)=\n(N)=2^{\beta+2}\Z$ implies that $2\s(\bar{N})=\n(\bar{N})=4\Z$.  If part 1(b)(ii) holds, then $\bar{N}_2$ is diagonalizable, and $\ord_2(dN)=3(\beta+1)$ implies that $\ord_2(d\bar{N})=3$.  Similarly, if part 1(b)(iii) holds, then $\bar{N}_2$ is diagonalizable, $\ord_2(d\bar{N})=5$ and $B(\nu,\bar{N}_2)=2\Z$.  Therefore, when any of these parts hold, then $\gen(\bar{M})$ primitively represents all units in $\Z_2^\times$ by Lemma \ref{L6}.  Since $\Z_p^\times\subseteq \theta(O^+(\bar{M}_p))$ for every prime $p$, it follows that $\gen(\bar{M})$ only has one spinor genus.  Therefore, $\epsilon+2n$ is represented by $\bar{M}$ for $n$ sufficiently large by \cite[Corollary]{DS-P90}.  And hence, $M$ represents $2^\beta\epsilon+2^\alpha n$ for all $n$ sufficiently large, and therefore $H(x)$ is almost universal. 

Suppose that $\alpha=\beta+1$ and parts 1(a) and 1(b) both fail.  Then, it follows immediately from Lemma \ref{L6} that $H(x)$ is not almost universal. 

For part (2), we will suppose that $\alpha=\beta+2$.  Then $B(\nu,N_2)\subseteq 2^{\beta}\Z_2$ by Lemma \ref{L1.1}, and therefore $\n(N)\subseteq 2^{\beta+1}\Z$. Consequently, any representation of $2^\beta\epsilon+2^\alpha n$ by $M$ must be from the coset $\nu+N$.  

When $B(\nu,N_2)=2^\beta\Z_2$, then $\n(N)=2^{\beta+1}\Z$, and from Remark \ref{RL1.1.1} we know that $N_2$, and hence $\bar{N_2}$, has an orthogonal decomposition. Therefore, we have $\bar{N}_2\cong\lan2\eta,2^i\gamma,2^j\mu\ran$ in a basis $\{e_1,e_2,e_3\}$, where $1\leq i\leq j$.  As discussed in the proof of Lemma \ref{L7}, we may assume that $i=1$, and $\nu=\frac{ae_1+e_2+ce_3}{2}$ where $0\leq a,c\leq 1$.  If $a=0$, then $\bar{Q}(e_1)+2\bar{B}(\nu,e_1)\in 2\Z$, which cannot happen since $\n(\nu,\bar{N})=4\Z$.  Therefore, $a=1$, and $\bar{M}_2\cong\Z[\nu,e_2,e_3]$.  In this basis, 
\[
\bar{M}_2\cong\begin{bmatrix}
\epsilon & \gamma & 2^{j-1}\mu c\\
\gamma  & 2\gamma & 0\\
2^{j-1}\mu c & 0 & 2^j \mu,
\end{bmatrix}.
\]
If $c=0$, then $\epsilon=\frac{\eta+\gamma}{2}$ and therefore $\bar{M}_2\cong\lan\epsilon, \epsilon\eta\gamma, 2^j\mu\ran$ with $j\geq 1$.  If $c=1$, then $\epsilon=\frac{\eta+\gamma}{2}+2^{j-2}\mu$, and in the basis $\{\nu,-\gamma \nu+\epsilon e_2,2^{j-1}\mu e_1-\eta e_3\}$,
\[
\bar{M}_2\cong\lan \epsilon, \epsilon\gamma(\eta+2^{j-1}\mu),2^j\eta\mu(\eta+2^{j-1}\mu)\ran.
\]
In any case, $d\bar{M}=2^j\eta\gamma\mu$.

Suppose that part (i) of 2(a) holds, then $\ord_2(dN)-3\beta$ is odd, and hence $\ord_2(d\bar{N})$ is odd.  Suppose that $t$ is an odd primitive spinor exception of $\gen(\bar{M})$.  Then, $E=\Q(\sqrt{-2})$, and we are in the situation outlined in \cite[Theorem 2(c)]{EH94}. But $\bar{M}_2$ has a binary unimodular component, so $``r"$ there is $0$, and so $\theta(O^+(\bar{M}_2))\not\subseteq \mathfrak{N}_2(E)$.  Therefore, $\gen(\bar{M})$ has no odd primitive spinor exceptions.  In particular, if $\ord_2(d\bar{N})$ is odd, then $\epsilon+4 n$ is not a primitive spinor exception of $\gen(\bar{M})$.  

Now, suppose that (i) fails, meaning that $\ord_2(d\bar{N})$ is even and $E=\Q(\sqrt{-1})$.  Suppose that part (ii) of 2(a) holds, and hence $\ord_2(d\bar{N})=4$, and therefore $j=2$.  So either $\epsilon=\frac{\eta+\gamma}{2}$, in which case $1+\eta\gamma\equiv 2\mod 4$, or $\epsilon=\frac{\eta+\gamma}{2}+\mu$ and in this case $1+\eta\gamma\equiv 0\mod 4$. In either case, $\bar{L}_2:=\frac{1}{2^\beta}L_2$ contains a binary unimodular component of the form $\lan1,\xi\ran$, where $\xi$ is a unit with $1+\xi\equiv 2\mod 4$.  Since $j=2$, from \cite[1.2]{EH78}, we get $\Q_2^\times=\theta(O^+(\bar{M}_2))$.  Therefore, $\theta(O^+(\bar{M}_2))\not\subseteq \mathfrak{N}_2(E)$, and thus $\epsilon+4n$ is not a primitive spinor exception of $\gen(\bar{M})$.  

Suppose that parts (i) and (ii) of 2(a) both fail, so $\ord_2(d\bar{N})\geq 6$ is even, and now regardless of our choice for $c$, $\bar{M}_2\cong \lan\epsilon,\epsilon\eta\gamma,2^j\mu\ran$. If part (iii) of 2(a) holds, then by Lemma \ref{L9}, $\gen(\bar{M})$ has no odd primitive spinor exceptions.  

Suppose now parts (i)-(iii) of 2(a) fail.  Then $N_2\cong\lan2^{\beta+1}\eta,2^{\beta+1}\gamma,2^{\beta+j}\mu\ran$, where $j> 2$ is even. Suppose that part (iv) holds, that is, $\eta\gamma\equiv 5\mod 8$.  Then, since $j$ is even, the quadratic space underlying $\bar{L}_2$ is $[1,5,\epsilon\mu]$.  From the failure of (iii), $\mu\equiv 1\mod 4$, but then, $\epsilon\equiv 1\mod 4$, or else $\bar{M}_2$ is isotropic.  Therefore, computing the spinor norm of $\bar{M}_2$ using \cite[1.2]{EH78}, we get $\theta(O^+(\bar{M}_2))=\{1,5,6,14\}{\Q_2^\times}^2\not\subseteq \mathfrak{N}_2(E)$, which means that $\epsilon+4n$ is not a primitive spinor exception of $\gen(\bar{M})$ in this case.

Now suppose that we are in part 2(b), so $B(\nu,N_2)=2^{\beta+1}\Z_2$ and $\n(G_2)=2^{\beta+2}\Z_2$, where $G_2$ is the orthogonal complement of $\nu$ in $N_2$.  Since $\alpha=\beta+2$, this implies that $\n(N)=2^{\beta+2}\Z$, and therefore, $\Z[2\nu]\cong\lan2^{\beta+2}\epsilon\ran$ splits $N_2$ as an orthogonal summand, and hence $M_2\cong\lan2^\beta\epsilon\ran\perp G_2$, with $\n(G_2)=2^{\beta+2}\Z_2$.  We immediately rule out the possibilty that $G_2\cong2^{\beta+1}\A$, since this would mean that $M_2$ is isotropic, therefore $\bar{M}_2\cong\lan\epsilon,4\gamma,2^{j}\mu\ran$ in a basis $\{e_1,e_2,e_3\}$, where $j\geq 2$.   

Suppose that part (i) of 2(b) holds; that is, suppose that $\ord_2(d\bar{N})$ is odd.  Now using \cite[Theorem 2(b)]{EH94}, we have $\theta^*(\bar{M}_2,t)\neq \mathfrak{N}_2(E)$, and thus $\epsilon+4n$ cannot be a primitive spinor exception of $\gen(\bar{M})$.  

Suppose that part (ii) of 2(b) holds; that is, $\ord_2(d\bar{N})=6$, and hence  $\bar{M}_2\cong\lan\epsilon, 4\gamma,4\mu\ran$ with $\gamma\mu\equiv 1\mod 4$ since $\bar{M}_2$ is anisotropic.  Therefore, we may use \cite[1.2]{EH78} to show that $\theta(O^+(\bar{M}_2))=\Q_2^\times$.  Hence, in this case $\theta(O^+(\bar{M}_2))\not\subseteq \mathfrak{N}_2(E)$, and hence $\epsilon+4n$ is not a primitive spinor exception of $\gen(\bar{M})$.  

Suppose that part (iii) or 2(b) holds; that is, suppose that $\rad(d\bar{N})'$ is divisible by a prime $q\equiv 3\mod 4$.  Then it is immediate from Lemma \ref{L9} that $\gen(\bar{M})$ has no odd primitive spinor exceptions.

Now we will suppose that $\alpha=\beta+3$, and recall from Lemma \ref{L1.1} that this implies $B(\nu,N_2)=2^{\beta+1}\Z_2$.  Therefore $\n(N)=2^{\beta+2}\Z$, and as in the $\alpha=\beta+2$ case, we get $\bar{M}_2\cong\lan\epsilon\ran\perp \bar{G}_2$ in the basis $\{\nu,f_1,f_2\}$ where the $f_i$ are some appropriately chosen linear combination of the $e_i$.  Again, we are guaranteed that any representation of $2^\beta\epsilon+2^\alpha n$ by $M$ must come from the coset $\nu+N$.

Suppose that 3(a) holds, then $\bar{G}_2\cong 2^i\A$, where $i\geq 2$ is even since $\bar{M}_2$ is anisotropic.  But then for any $\rho\in \Z_2^\times$ there is a vector $\nu_\rho\in 2^i\A$ such that $Q(\nu_\rho)=2^{i+1}\rho$, which implies that $\Z_2^\times\subseteq \theta(O^+(\bar{M}_2))$.  Now,  $\Z_p^\times\subseteq \theta(O^+(\bar{M}_p))$ for every prime $p$, and therefore $\gen(\bar{M})$ has no primitive spinor exceptions. 

Suppose that part 3(a) fails, and 3(b) holds.  Then $\bar{G}_2$ is proper with $\n(\bar{G}_2)=8\Z_2$, and hence $\bar{M}_2\cong\lan \epsilon, 8b_1, 2^jb_2\ran,$ in the basis $\{\nu,f_1,f_2\}$ where the $f_i$ are some appropriately chosen linear combination of the $e_i$, and $b_1,b_2\in \Z_2^\times$ and $j\geq 3$.  If $\ord_2(d\bar{N})$ is even, then using \cite[Theorem 2(b)]{EH94} we get that $\theta^*(\bar{M}_2,t)\neq \mathfrak{N}_2(E)$.  In this case, $\gen(\bar{M})$ has no odd primitive spinor exceptions.  If $\ord_2(d\bar{N})=9$, then $j=4$ and hence $\bar{M}_2$ is of {\em Type E} as defined in \cite[page 531]{EH78}.  In this case, $\theta(O^+(\bar{M}_2))={\Q_2^\times}$, so $\gen(\bar{M})$ has no odd primitive spinor exceptions.  

Suppose that 3(a)-(b) fail and 3(c) holds.  Then $\ord_2(d\bar{N})$ is odd, and we are in the setting of \cite[Theorem 2(b)]{EH94} and since $``r"$ there equals 4, $\theta^*(\bar{M}_2,t)\neq \mathfrak{N}_2(E)$ for any odd $t$.  Hence, $\gen(\bar{M})$ has no odd primitive spinor exceptions in this case.  

If 3(a)-(c) all fail and 3(d) holds, then from Lemma \ref{L9}, $\gen(\bar{M})$ has no odd primitive spinor exceptions. 

Suppose that 3(a)-(d) all fail, and 3(e) holds.  Then, $\bar{M}_2\cong\lan\epsilon, 2^i\gamma, 2^j\mu\ran$ with $\epsilon\gamma\mu\not\equiv \epsilon\mod 8$.  Consequently, $(\epsilon+8n)\epsilon\gamma\mu\not\equiv 1\mod 8$ for any positive integer $n$. Therefore, $\Q(\sqrt{-(\epsilon+8n)d\bar{N}})\neq\Q(\sqrt{-\lambda})$, so in this case, $\epsilon+8n$ is not a primitive spinor exception of $\gen(\bar{M})$ for any positive integer $n$.  

Suppose that 3(a)-(e) all fail, and 3(f) holds.  Then, $\bar{M}_2\cong\lan\epsilon,8\gamma,2^j\gamma\ran$ with $j\geq 6$ even, and $\epsilon\not\equiv \gamma\mod 8$.  If $\epsilon\gamma\equiv -1,5\mod 8$ then $\bar{M}_2$ is anisotropic, therefore we may suppose that $\epsilon\gamma\equiv 3\mod 8$.  Now, using \cite[1.9]{EH75} to compute the spinor norm of $\bar{L}_2\cong\lan1,24,3\cdot2^j\ran$, we see that for any choice of $j$, $\theta (O^+(\bar{M}_2))$ contains $-6$.  Since $\ord_2(d\bar{N})$ here is odd, $-6\not\in \mathfrak{N}_2(E)$, so $\gen(\bar{M})$ has no odd primitive spinor exceptions in this case.  

Therefore, when any part of (2) or (3) holds, we have shown that $\epsilon+2^{\alpha-\beta} n$ is not a primitive spinor exception of $\gen(\bar{M})$ for any $n$.  Therefore $\epsilon+2^{\alpha-\beta}n$ is represented primitively by $\spn(\bar{M})$, and therefore by $\bar{M}$ itself for all $n$ sufficiently large, according to \cite[corollary]{DS-P90}.  Therefore $2^\beta\epsilon+2^\alpha n$ is represented by $M$ itself for all but finitely many $n$.  But any representation of $2^\beta\epsilon+2^\alpha n$ by $M$ must come from the coset $\nu+N$, and hence $\nu+N$ represents $2^\beta \epsilon+2^\alpha n$ for all but finitely many $n$.  Therefore, $H(x)$ is almost universal. 

Now we will show that when $\alpha=\beta+2$ or $\alpha=\beta+3$ but parts (2) and (3) fail, then $\rad(dN)'$ is a primitive spinor exception of $\gen(\bar{M})$.  It follows from Lemma \ref{L0} part (1) that $\rad(dN)'$ is primitively represented by $\bar{M}_p$ for every odd prime $p$.  When $\alpha=\beta+2$ and $B(\nu,N_2)=2^{\beta}\Z_2$, then from the failure of (2), $\bar{M}_2\cong\lan\epsilon,\epsilon\eta\gamma,2^j\mu\ran$ in a basis $\{e_1,e_2,e_3\}$.  Therefore $\bar{L}_2$, which must be anisotropic, has underlying quadratic space $[1,\eta\gamma,\epsilon\mu]$, and thus $\epsilon\rad(dN)'\equiv 1\mod 4$.  If $B(\nu,N_2)=2^{\beta+1}\Z_2$, then the underlying quadratic space of $\bar{M}_2$ is $[\epsilon,\gamma,\mu]$.  Since $M_2$ is anisotropic, therefore $\gamma\mu\equiv 1\mod 4$, and hence $\epsilon\equiv 1\mod 4$ by the failure of 2(b)(iii).  When $\alpha=\beta+3$, then $\epsilon\equiv\rad(dN)'\mod 8$ from the failure of 3(e). Hence in all cases we have $\epsilon\equiv \rad(dN)'\mod 2^{\alpha-\beta}$, and combining this with Lemma \ref{L7}, we get that $\rad(dN)'$ is represented primitively by $\bar{M}_2$.  Therefore, $\rad(dN)'$ is represented primitively by $\gen(\bar{M})$. 

In what follows, we will let $E$ denote $\Q\left(\sqrt{-\rad(dN)'d\overline{N}}\right)=\Q(\sqrt{-\lambda})$.  At the primes $p$ where $\big(\frac{-\lambda}{p}\big)=1$, we have $E_p=\Q_p$, and therefore $\mathfrak{N}_p(E)=\Q_p^\times$.  But now it follows immediately from the containments given in equation (3) of \cite{EH94} that $\theta(O^+(\bar{M}_p))=\mathfrak{N}_p(E)=\theta^*(\bar{M}_p,\rad(dN)')$.  At the primes $p$ for which $\big(\frac{-\lambda}{p}\big)=-1$, it follows easily from \cite[Theorem 1]{EH94} that $\theta(O^+(\bar{M}_p))\subseteq \mathfrak{N}_p(E),$ and $\mathfrak{N}_p(E)=\theta^*(\bar{M}_p,\rad(dN)')$. 

To compute the spinor norm and relative spinor norm of $\bar{M}_2$, it will be helpful to consider separately that cases for $\alpha=\beta+2$, $B(\nu,N_2)=2^\beta\Z_2$ or $2^{\beta+1}\Z_2$, and $\alpha=\beta+3$.  Let us first consider the case when $\alpha=\beta+2$ and $B(\nu,N_2)=2^\beta\Z_2$.  From our previous discussion, $\bar{L}_2\cong\lan 1,1,2^j\epsilon\mu\ran$, with $j\geq 4$ even, and $\epsilon\mu\equiv 1\mod 4$.   Using \cite[1.2]{EH78}, since our binary component is neither even nor odd, 
\[
\theta(O^+(\bar{M}_2))=\{\rho\in \Q_2^\times:(\rho, -1)=1\}=\{1,2,5,10\}{\Q_2^\times}^2=\mathfrak{N}_2(E).
\]
Now, using \cite[Theorem 2(b)]{EH94} we compute the relative spinor norm. The $``K"$ and $``K'"$ in that theorem are $K\cong\lan 2^{-2}\epsilon,\epsilon,2^j\gamma\ran$, and $K'=\bar{M}_2$.  Since $j$ is even, and clearly $\theta(O^+(K'))= \theta(O^+(\bar{M}_2))\subseteq \mathfrak{N}_2(E)$, parts (ii) and (iv) fail immediately.  Moreover, part (iii) fails, since $j\geq 4$.  Also, since $\rad(dN)'$ is odd, it is not contained in any $\Z_2$-ideal generated by $2^j$.  Using \cite[1.9]{EH75}, we get $\theta(O^+(K))\subseteq \{1,5\}{\Q_2^\times}^2\subseteq \mathfrak{N}_2(E)$, and therefore (i) of \cite[Theorem 2(b)]{EH94} fails.  Therefore, $\theta^*(\bar{M}_2,\rad(dN)')=\mathfrak{N}_2(E)$ when $B(\nu,N_2)=\Z_2$.  

Now we deal with the case where $\alpha=\beta+2$ and $B(\nu,N_2)=2^{\beta+1}\Z_2$.  From previous discussion, $\bar{M}_2\cong\lan\epsilon,4\gamma,2^j\mu\ran$, with $j\geq 4$ even, and $\epsilon\gamma\mu\equiv 1\mod 4$.  Note that $\bar{M}_2$ is anisotropic, so immediately $\epsilon\gamma\equiv 1\mod 4$.  Now using \cite[1.9]{EH75}, we get $\theta(O^+(\bar{M}_2))\subseteq \mathfrak{N}_2(E)$.  Using \cite[Theorem 2(b)]{EH94} we have $``K"$ and $``K'"$ in that theorem are $K=\bar{M}_2$ and $K'\cong\lan4\epsilon,4\beta,2^j\gamma\ran$.  We have $``r"=2$ here, which is even, and $\theta(O^+(K))\subseteq \mathfrak{N}_2(E)$, so parts (i) and (iv) fail.  Furthermore, $\rad(dN)'$ is odd, and is therefore not contained in any $\Z_2$-ideal generated by $2^{2}$ or $2^{j}$.  So, we may conclude that $\theta^*(\bar{M}_2,\rad(dN)')=\mathfrak{N}_2(E)$.  

Next we will deal with the case when $\alpha=\beta+3$, and hence $\bar{M}_2\cong\lan\epsilon,2^i\gamma,2^j\gamma\ran$ with $i\leq j$ by the failure of 3(a) and 3(e). First we will consider the case where $i=j$, so we may assume that $i\geq 4$, from the failure of 3(b).  Thus, using \cite[1.2]{EH78}, 
\[
\theta(O^+(\bar{M}_2))=\{\rho\in \Q_2^\times:(\rho,-1)=1\}=\{1,2,5,10\}{\Q_2^\times}^2,
\]
which is equal to $\mathfrak{N}_2(E)$. Furthermore, $\theta^*(\bar{M}_2,\rad(dN)')=\mathfrak{N}_2(E)$, since $i\geq 4$ and hence parts (i)-(iv) of \cite[Theorem 2(b)]{EH94} immediately fail.

If $i\neq j$, then $\bar{L}_2\cong\lan1,2^i\epsilon\gamma,2^j\epsilon\gamma\ran$, where $\epsilon\gamma\equiv 1\mod 4$ when $ord_2(d\bar{N})$ is even, and $\epsilon\gamma\equiv 1,3\mod 8$ when $\ord_2(d\bar{N})$ is odd. When $i$ and $j$ have the same parity, then computing the spinor norm using \cite[1.9]{EH75}, with $U\cong\lan 1,2^i\epsilon\gamma\ran$ and $W\cong2^i\epsilon\gamma\lan1,2^{j-i}\ran$, we get
\[
Q(P(U))\subseteq \{1,5,2^i\epsilon\gamma\}{\Q_2^\times}^2 \text{ and }Q(P(W))\subseteq2^i\epsilon\gamma \{1,5\}{\Q_2^\times}^2.
\]
Therefore, $\theta(O^+(\bar{M}_2))\subseteq\mathfrak{N}_2(E)$ regardless of our choice for $\epsilon\gamma$. When $i$ and $j$ have opposite parity, then using the same $U$ and $W$ as above, we get 
\[
Q(P(U))\subseteq \{1,2,3,6\}{\Q_2^\times}^2 \text{ and }Q(P(W))\subseteq2^{i}\epsilon\gamma \{1,2,3\}{\Q_2^\times}^2,
\]
and again $\theta(O^+(\bar{M}_2))\subseteq\mathfrak{N}_2(E)$ for any choice of $\epsilon\gamma\equiv 1,3\mod 8$.  Finally, given the failure of part (3), it can be easily verified that $\theta^*(\bar{M}_2,\rad(dN)')\subseteq \mathfrak{N}_2(E)$ using \cite[Theorem 2]{EH94}.

We have shown that when parts (2) and (3) fail, then $\rad(dN)'$ is a primitive spinor exception of $\gen(\bar{M})$.  Now we will show that when (1), (2), and (3) all fail, $H(x)$ is almost universal if and only if (4) holds. 

Suppose that (1), (2), and (3) all fail, and (4) holds.  Then, $\frac{2^\beta\rad(dN)'-2^\beta\epsilon}{2^\alpha}$ is represented by $H(x)$ for $\alpha=\beta+2,\beta+3$; hence $\rad(dN)'$ is represented by the coset $\nu+\bar{N}$, and therefore by the lattice $\bar{M}$.  If $\epsilon+2^{\alpha-\beta} n$ is not a primitive spinor exception of $\gen(\bar{M})$, then it is represented by $\bar{M}$ for $n$ sufficiently large, by \cite[Corollary]{DS-P90}.  Consequently, $2^\beta\epsilon+2^\alpha n$ is represented by $M$ for all $n$ sufficiently large.  If $\epsilon+2^\alpha n$ is a primitive spinor exception of $\gen(\bar{M})$, then $\epsilon+2^\alpha n=m^2\rad(dN)'$ for some integer $m$, and therefore $\epsilon+2^\alpha n$ is represented by the lattice $\bar{M}$ for all $n$, implying that $2^\beta\epsilon+2^\alpha n$ is represented by $M$ for all $n$.  Therefore, when (4) holds, then $H(x)$ is almost universal.  

Now, suppose that $\alpha=\beta+2,\beta+3$ and (2)-(4) all fail.  Then, $\frac{2^\beta\rad(dN)'-2^\beta\epsilon}{2^\alpha}$ is not represented by $H(x)$, and therefore $\rad(dN)'$ is not represented by $\bar{M}$.  Since we are assuming that (2)-(4) all fail, we know that $\rad(dN)'$ is a primitive spinor exception of $\gen(\bar{M})$.  Furthermore, from previous discussions, we know that when (2) and (3) fail, then  $\rad(dN)'\equiv \epsilon\mod 2^{\alpha-\beta}$.  Now, as shown in \cite{S-P00}, there exist infinitely many primes $q$ for which
\[
n:=\frac{\rad(dN)'q^2-\epsilon}{2^{\alpha-\beta}}
\]
is an integer not represented by $H(x)$.  Therefore, in this case $H(x)$ is not almost universal.  
\end{pot}

\section*{Acknowledgements}This work has been partially carried out at the Max Planck Institute for Mathematics in Bonn, Germany.  I wish to thank the MPIM for its support and hospitality during my stay.  In addition, I wish to extend my sincerest thanks to Wai Kiu Chan for his patience in advising the thesis in which this work originated. Thanks also to the referee for providing many helpful comments and suggestions.

%-------------------- Bibliography --------------------------%
%\section*{References}
\bibliographystyle{amsplain}
\bibliography{Haensch.bbl}

\providecommand{\bysame}{\leavevmode\hbox to3em{\hrulefill}\thinspace}
\providecommand{\MR}{\relax\ifhmode\unskip\space\fi MR }
% \MRhref is called by the amsart/book/proc definition of \MR.
\providecommand{\MRhref}[2]{%
  \href{http://www.ams.org/mathscinet-getitem?mr=#1}{#2}
}
\providecommand{\href}[2]{#2}
\begin{thebibliography}{10}

\bibitem{BO09}
J.~Bochnak and B.-K. Oh, \emph{Almost-universal quadratic forms: an effective
  solution of a problem of {R}amanujan}, Duke Math. J. \textbf{147} (2009),
  no.~1, 131--156. \MR{2494458 (2009m:11050)}

\bibitem{CO09}
W.K. Chan and B.-K. Oh, \emph{Almost universal ternary sums of triangular
  numbers}, Proc. Amer. Math. Soc. \textbf{137} (2009), no.~11, 3553--3562.
  \MR{2529860 (2010f:11053)}

\bibitem{CO12}
\bysame, \emph{Representations of integral quadratic polynomials}, Contemporary
  Math. \textbf{587} (2013), 31--46.

\bibitem{DS-P90}
W.~Duke and R.~Schulze-Pillot, \emph{Representation of integers by positive
  ternary quadratic forms and equidistribution of lattice points on
  ellipsoids}, Invent. Math. \textbf{99} (1990), no.~1, 49--57. \MR{1029390
  (90m:11051)}

\bibitem{EH75}
A.~G. Earnest and J.~S. Hsia, \emph{Spinor norms of local integral rotations.
  {II}}, Pacific J. Math. \textbf{61} (1975), no.~1, 71--86. \MR{0404142 (53
  \#7946)}

\bibitem{EH78}
\bysame, \emph{Spinor genera under field extensions. {II}. {$2$} unramified in
  the bottom field}, Amer. J. Math. \textbf{100} (1978), no.~3, 523--538.
  \MR{0491488 (58 \#10731a)}

\bibitem{EH94}
A.~G. Earnest, J.~S. Hsia, and D.~C. Hung, \emph{Primitive representations by
  spinor genera of ternary quadratic forms}, J. London Math. Soc. (2)
  \textbf{50} (1994), no.~2, 222--230. \MR{1291733 (95k:11044)}

\bibitem{H13}
A.~Haensch, \emph{A characterization of almost universal ternary quadratic
  polynomials with odd prime power conductor}, J. Number Theory \textbf{141}
  (2014), 202--213. \MR{3195396}

\bibitem{H75}
J.~S. Hsia, \emph{Spinor norms of local integral rotations. {I}}, Pacific J.
  Math. \textbf{57} (1975), no.~1, 199--206. \MR{0374029 (51 \#10229)}

\bibitem{H99}
\bysame, \emph{Arithmetic of indefinite quadratic forms}, Integral quadratic
  forms and lattices ({S}eoul, 1998), Contemp. Math., vol. 249, Amer. Math.
  Soc., Providence, RI, 1999, pp.~1--15. \MR{1732345 (2001e:11037)}

\bibitem{HJ97}
J.~S. Hsia and M.~J{\"o}chner, \emph{Almost strong approximations for definite
  quadratic spaces}, Invent. Math. \textbf{129} (1997), no.~3, 471--487.
  \MR{1465331 (98m:11025)}

\bibitem{JK94}
M.~J{\"o}chner and Y.~Kitaoka, \emph{Representations of positive definite
  quadratic forms with congruence and primitive conditions}, J. Number Theory
  \textbf{48} (1994), no.~1, 88--101. \MR{1284877 (95f:11024)}

\bibitem{OM}
O.~T. O'Meara, \emph{Introduction to quadratic forms}, Classics in Mathematics,
  Springer-Verlag, Berlin, 2000, Reprint of the 1973 edition. \MR{1754311
  (2000m:11032)}

\bibitem{R00}
S.~Ramanujan, \emph{On the expression of a number in the form
  {$ax^2+by^2+cz^2+du^2$} [{P}roc. {C}ambridge {P}hilos. {S}oc. {\bf 19}
  (1917), 11--21]}, Collected papers of {S}rinivasa {R}amanujan, AMS Chelsea
  Publ., Providence, RI, 2000, pp.~169--178. \MR{2280863}

\bibitem{S-P00}
R.~Schulze-Pillot, \emph{Exceptional integers for genera of integral ternary
  positive definite quadratic forms}, Duke Math. J. \textbf{102} (2000), no.~2,
  351--357. \MR{1749442 (2001a:11068)}

\bibitem{T29}
W.~Tartakowsky, \emph{Die gesamtheit der zahlen, die durch eine positive
  quadratische form $f(x_1x_2\dots x_s)$ ($s\ge 4$) darstellbar sind. zweiter
  teil}, Bulletin de l'Acad\'emie des Sciences de l'URSS. VII s\'erie (1929),
  no.~2, 165--196.

\end{thebibliography}
%-------------------- Bibliography --------------------------%

\end{document}